\DeclareMathOperator{\End}{End}
\DeclareMathOperator{\LHC}{\text{LHC}}
\newcommand{\Z}{{\mathbb{Z}}}
\newcommand{\sq}{\mathbin{\square}}
\theoremstyle{plain}
\newtheorem{theorem}{Theorem}[section] 
\newtheorem{lemma}[theorem]{Lemma}
\newtheorem{proposition}[theorem]{Proposition}
\newtheorem{corollary}[theorem]{Corollary}
\theoremstyle{definition}
\newtheorem{example}[theorem]{Example}
\newtheorem{remark}[theorem]{Remark}
\newtheorem{problem}[theorem]{Problem}
\begin{document}

\title{Endomorphisms of The Hamming Graph and Related Graphs}
\author{Artur Schaefer\\
  {\small Mathematical Institute, University of St Andrews}\\
  {\small North Haugh, St Andrews KY16 9SS, UK}\\
  {\small  as305@st-andrews.ac.uk}}
\date{}
\maketitle

\begin{abstract}
 In this paper we determine all singular endomorphisms of the Hamming graph and other related graphs. The Hamming graph has vertices $\mathbb{Z}^{m}_n$ where two vertices are adjacent, if their Hamming distance is $1$. We show that its singular endomorphisms are uniform (each kernel has the same size) and that they are induced by Latin hypercubes (which essentially determines the number of singular endomorphisms). However, we do the same for its complement and some related graphs where the Hamming distance is allowed to be one of $1,...,k$, for some $1\leq k\leq m-1$. Ultimately, we consider the same situation where the vertices are tuples in $\mathbb{Z}_{n_1}\times\mathbb{Z}_{n_2}\times\cdots \times\mathbb{Z}_{n_m}$ (not all $n_i$ are equal).
\end{abstract}

\section{Introduction}\label{section1}

Recently, the search for graph endomorphisms has been motivated by not only by the mathematical field graph theory itself, but also from a different one, namely synchronization theory. The recent work on synchronization theory or rather synchronizing permutation groups \cite{pjc13,araujo15,araujo13} is considering transformation semigroups $S$ which contain a non-trivial permutation group $G$ such that $S=\langle G,T\rangle$, for a set $T$ of singular transformations (or maps). The main problem in synchronization theory is to classify all groups $G$ which are synchronizing, that is where $\langle G,t\rangle$ contains a transformation of rank $1$ (size of its image). A secondary problem is to find all tuples $(G,t)$ such that this semigroup is synchronizing. 

However, if a group is not synchronizing, then there is a map $t$ of minimal rank which is not synchronized by $G$. Maps of minimal ranks correspond to section-regular partitions, as shown by Neumann \cite{neumann}. Moreover, such partitions are uniform and pose an interesting combinatorial object by themselves; however, in \cite{pjc08} Cameron and Kazanidis introduced a graph theoretical approach towards this problem. They showed that if there is a map not synchronized, then there is a graph having this map as an endomorphism. The precise result is given by the next theorem.

\begin{theorem}[Thm 2.4 \cite{pjc08}]\label{thm1}
 A group $G$ does not synchronize a transformation $f$, if and only if there is non-trivial graph $X$ with complete core such that $\langle G,f\rangle\leq \End(X)$. 
\end{theorem}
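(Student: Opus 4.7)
The plan is to prove the two directions separately. The reverse implication is the easier one: if $X$ is a non-trivial graph (so $X$ contains at least one edge $\{u,v\}$) and $\langle G, f\rangle \leq \End(X)$, then no element of $\langle G, f\rangle$ can have rank one, because a constant map would collapse the edge $\{u,v\}$ to a single vertex, which is not an edge of the simple graph $X$. Hence $\langle G, f\rangle$ contains no rank-one transformation, i.e.\ $G$ does not synchronize $f$. Notice that only non-triviality, not the complete-core hypothesis, is used here.

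For the forward implication I would use the standard Cameron--Kazanidis construction. Assume $G$ does not synchronize $f$ and set $S := \langle G, f\rangle$. Put the vertex set of $X$ equal to the common domain of the elements of $S$, and declare $u \sim v$ in $X$ if and only if $u \neq v$ and $s(u) \neq s(v)$ for every $s \in S$. This relation is symmetric and irreflexive, so $X$ is a simple graph. The crucial verification is $S \leq \End(X)$: if $u \sim v$ and $t \in S$, then for every $s \in S$ the element $st$ lies in $S$, so $(st)(u) \neq (st)(v)$, i.e.\ $s(t(u)) \neq s(t(v))$, witnessing $t(u) \sim t(v)$.

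It remains to show that $X$ is non-trivial and has complete core. Choose $s_0 \in S$ of minimum rank $r$; the hypothesis gives $r \geq 2$. For any $t \in S$, the composite $ts_0$ lies in $S$ and must also have rank at least $r$, forcing $t$ to be injective on $\im(s_0)$. Hence for distinct $a, b \in \im(s_0)$ and every $t \in S$ we have $t(a) \neq t(b)$, so $a \sim b$ in $X$; thus $\im(s_0)$ induces a clique of size $r \geq 2$, giving non-triviality and $\omega(X) \geq r$. Conversely, $s_0$ is itself an $X$-endomorphism onto this clique, so $X$ admits a homomorphism to $K_r$ and $\chi(X) \leq r$. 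Combining, $\chi(X) = \omega(X) = r$, and by the standard characterisation this forces the core of $X$ to be the complete graph $K_r$.

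The main thing to get right is the ``minimum rank is preserved under left multiplication'' observation, since it is what simultaneously shows that $\im(s_0)$ is a clique and supplies the retraction witnessing that the core is complete. Everything else is routine bookkeeping: symmetry and irreflexivity of the adjacency relation, closure of $S$ under composition, and the identification of the core via a smallest-image endomorphism.
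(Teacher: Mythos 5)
The paper does not actually prove this statement --- it is quoted verbatim as Theorem 2.4 of Cameron--Kazanidis \cite{pjc08} and used as a black box --- so there is no internal proof to compare against. Your argument is the standard one from that source and it is correct: the reverse direction correctly observes that a rank-one map cannot preserve an edge; the forward direction builds the ``derived graph'' $u\sim v \iff u\neq v$ and $s(u)\neq s(v)$ for all $s\in S$, verifies $S\leq \End(X)$ via closure under composition, and the key lemma (left-composing with any $t\in S$ cannot drop the rank of a minimum-rank $s_0$ below $r$, hence $t$ is injective on $\im(s_0)$) correctly yields both that $\im(s_0)$ is an $r$-clique and that $s_0$ is a homomorphism onto it, whence $\chi(X)=\omega(X)=r$ and $X$ is hom-equivalent to $K_r$, so its core is $K_r$.

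One small caveat about the edge case: if $f$ is a permutation, your construction produces the complete graph $K_n$, which is ``non-trivial'' under your reading (has an edge) but is usually counted as trivial in the synchronization literature (where trivial means complete or null). This is not a gap in your reasoning but a mismatch of conventions; in the intended setting $f$ is singular, so $r<n$, the graph $X$ has both an edge and a non-edge, and your proof gives non-triviality under either convention. It would be worth stating explicitly that you use $r<n$ to exhibit a non-adjacent pair if the stricter convention is intended.
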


Anyway, a synchronizing group is necessarily primitive \cite{pjclectureonsynchronization}, but there are primitive groups that are not synchronizing. Every non-synchronizing primitive group fails to synchronize at least one uniform transformation (that is, a transformation whose kernel has parts of equal size), and it has previously been conjectured (cf. \cite{araujo15}) that this was essentially the only way in which a primitive group could fail to be synchronizing – in other words, that a primitive group synchronizes every non-uniform transformation.

The first place to look for a counter-example is the so-called non-basic primitive groups; such a group is contained in the automorphism group of the Hamming graph. This led in part to the current research. Indeed, an infinite family of non-basic counter-examples was subsequently found, see \cite{araujo15}.


Using this Theorem \ref{thm1}, the study of synchronizing groups translates into the study of graph endomorphisms, and many (primitive) groups were shown to be non-synchronizing that way. Moreover, this theorem reignited the study of graph endomorphisms (which was possibly motivated by their application to synchronization theory \cite{godsil11,huang14,huang15}). 

In particular, in \cite{huang14} and \cite{huang15} the authors described endomorphisms of the Grassman graphs and graphs from alternating forms. These graphs are so-called distance-transitive graphs, and a survey on the classification of these graphs can be found in \cite{vanbon07}. Furthermore, the Hamming graph is a distance-transitive graph; so basically, the current research continues the analysis of singular endomorphisms of distance-transitive graphs

The Hamming graph $H(m,n)$ is the graph whose vertices are elements from $\mathbb{Z}_n^{m}$ where two vertices are adjacent, if their Hamming distance is exactly $1$. This graph is very famous and much is known about it, for instance this graph is actually the Cartesian product of $m$ complete graphs $K_n$, that is
\[K_n\sq \cdots \sq K_n.\]
Moreover, if $m=2$, then $H(m,n)$ is the strongly regular square lattice graph $L_2(n)$ (which in fact is an orthogonal array graph $OA(2,n)$ \cite[Thm.~ 10.4.2]{godsilbook}). This graph is one of the few families of strongly regular graphs whose minimum eigenvalue is $-2$ (cf. Seidel's theorem \cite{pjcdesignsbook}). In general, the connection between Hamming graphs and  coding theory is of major importance. 

In general, if $S$ is a subset of $\{1,...,n\}$, then we define a Hamming graph $H(m,n,S)$ to be a graph over the same vertex set as $H(m,n)$ whose vertices are adjacent, if their Hamming distance is in $S$. (For the reader who is familiar with association schemes: the graph $H(m,n,S)$ is a union of associates in the Hamming association scheme.) If $S$ consists of a single element $k$, then we write $H(m,n,k)$, and if $k=1$, then this is the Hamming graph $H(m,n)$.

Before moving on, we need to define $k$-dimensional layers (or $k$-layers) and systems of $k$-layers. From school everyone knows that one can draw a cube by drawing its layers iteratively. That is, a cube is a collection of two dimensional layers, which are squares. This concept applies to higher dimensions and is described here. First, a $k$-layer is the maximal clique of the Hamming graph $H(m,n,S)$, where $S=\{1,...,k\}$. Essentially, this is a set of the form
\[L=a+\langle e_{i_1},...,e_{i_k}\rangle,\]
where $a\in \mathbb{Z}_n^{m}$ and standard tuples $e_i=(0,...,0,1,0,...,0)$ with $1$ at the $i$th position. A system of $k$-layers is a set of parallel $k$-layers which cover $\mathbb{Z}_n^{m}$. 

The $k$-dimensional layers play an important role for Hamming graphs, and so does their number; let $h_k(m,n)$ denote this number. The maximal cliques in the square lattice graph are $1$-layers their number is $2n$. Also there are $mn$ layers of dimension $m-1$ in $H(m,n)$. To obtain a $k$-layer, we need to choose $k$ of the $m$ coordinates, and for each such choice there are $m-k$ coordinates which are fixed. So, the value for $h_k(m,n)$ is
\begin{equation*}
 h_{k}(m,n)=\binom{m}{k} n^{m-k}.
\end{equation*}
Applying this formula to the number of maximal cliques in $H(m,n)$, which in fact are $1$-layers, reveals that there are $h_1(m,n)=mn^{m-1}$ of them. Similarly, the number of $(m-1)$-layers is $h_{m-1}(m,n)=mn$.

However, we are still missing the cuboidal versions. The cuboidal Hamming graphs has vertices given by the set $\mathbb{Z}_{n_1}\times \mathbb{Z}_{n_2}\times \cdots \times \mathbb{Z}_{n_m}$ (with possibly distinct $n_i$) where two vertices are adjacent, if their Hamming distance is in a set $S\subseteq \{1,...,m\}$. These graphs are denoted by $H(n_1,n_2,...,n_m,S)$ and we (usually) assume $n_1\geq n_2\geq \cdots \geq n_m$. (For convenience we write $H(n_1,n_2,...,n_m)$, if $S=\{1\}$.) Clearly, $k$-layers are defined respectively.

Next, we need to introduce Latin hypercubes of class $k$. These cubes have only occurred a few times in the literature. They initially appeared 1950 in \cite{kishen50}, but were rediscovered in the 70's by Keedwell and Denes \cite{denes74} and rather recently by Ethier \cite{ethier08}, Moura et. al. \cite{moura15}, and Schaefer \cite{schaeferlatinexistence,schaeferlatinembeddings,schaeferlatinextension}. A $d$-dimensional Latin hypercube of order $n^{k}$ and class $k$ is an $n\times n\times \cdots \times n$ ($d$ times) array based on $n^{k}$ distinct symbols, each repeated $n^{d-k}$ times, such that each occurs exactly once in each $k$-layer. We will write $\LHC(d,n,k)$ for such cubes, and $\LHC(d,n)$, if $k=1$. The number of all Latin hypercubes $\LHC(d,n,k)$ is denoted by $\# \LHC(d,n,k)$.

The corresponding cuboidal versions are somewhat more complicated. Let $n_1\geq n_2\geq \cdots \geq n_d\geq 2$ be integers. A Latin hypercuboid of dimension $d$, type $(n_1,...n_d)$, order $n$ and class $k$ is an $n_1\times n_2 \times \cdots \times n_d$ array based on $n=\prod\limits_{i=1}^{k} n_i$ distinct symbols, such that in every $k$-layer with $n$ entries each symbol occurs exactly once and in any other $k$-layer with less entries each symbol occurs at most once. We write $\LHC(n_1,...,n_d,k)$ for such an hypercuboid. Again, more on these objects can be found in the research of Schaefer.

The most important results in this paper are the following ones. Regarding the kernel structure of the endomorphisms of $H(m,n,S)$, we obtain the next theorem.

\begin{theorem}\label{thm2}
 Let $\Gamma$ be the Hamming graph $H(m,n)$ and $\Delta$ be the graph $H(m,n,m)$. Then it holds:
 \begin{enumerate}
  \item A singular endomorphism of $\Gamma$ is uniform of rank $n^{k}$, for any $1\leq k\leq m-1$, and its image is a $k$-layer.
  \item A singular endomorphism of $\Delta$ is uniform of rank $n^{k}$, for any $1\leq k\leq m-1$.
  \item A singular endomorphism of $\overline{\Gamma}$ is a colouring of rank $n^{m-1}$ (hence uniform). 
  \item A singular endomorphism of $\overline{\Delta}$ is a colouring of rank $n^{m-1}$ (hence uniform).
 \end{enumerate}
\end{theorem}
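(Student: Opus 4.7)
All four statements reduce to the interplay of two elementary observations and the clique/chromatic data of the four graphs. The two observations are: fibres of a graph homomorphism are independent sets (so a clique of size $c$ is mapped bijectively to a clique of size $c$; in particular, endomorphisms preserve the family of maximum cliques whenever $\omega$ is invariant under the map), and the rank of a singular endomorphism $f$ of $X$ satisfies $\rank(f)\geq\chi(X)$, since the fibre partition is a proper $\rank(f)$-colouring. The relevant parameters are $\omega(\Gamma)=\omega(\Delta)=\chi(\Gamma)=\chi(\Delta)=n$ with maximum cliques being, respectively, the $1$-layers of $\Gamma$ and the Latin-hypercube transversals of $\Delta$ (i.e.\ $m$-tuples of permutations of $\mathbb{Z}_n$), and $\omega(\overline\Gamma)=\omega(\overline\Delta)=\chi(\overline\Gamma)=\chi(\overline\Delta)=n^{m-1}$, with maximum cliques being Singleton-optimal codes (MDS codes of minimum distance $2$, equivalent to Latin hypercubes) in $\overline\Gamma$ and $(m-1)$-layers in $\overline\Delta$.

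\textbf{Parts 1 and 2.} Let $f$ be a singular endomorphism of $\Gamma$. By clique-invariance every $1$-layer maps bijectively to a $1$-layer. Analysing how $f$ acts on two parallel $1$-layers in direction $i$ joined by matched edges in a transverse direction $j\neq i$ shows that parallelism of $1$-layers is preserved, and that the set $I\subseteq\{1,\dots,m\}$ of directions for which the $1$-layer in that direction through a vertex $v\in\im f$ lies entirely in $\im f$ is independent of $v$. Setting $k=|I|$ identifies $\im f$ with a $k$-layer; $k\leq m-1$ by singularity, and $\rank(f)=n^k$. For uniformity, an iteration argument shows that $f|_{\im f}$ is an automorphism of the $k$-layer $L$, so after composing with its inverse one may assume $f$ is a retraction onto $L$; each of the $n^k$ orthogonal $(m-k)$-layers then meets $L$ in exactly one point $v$ and is collapsed entirely to $v$ by the $1$-layer preservation of $f$ in the transverse directions, yielding fibres of the common size $n^{m-k}$. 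Part 2 proceeds identically with ``$1$-layer'' replaced by ``maximum clique of $\Delta$'', the necessary rigidity of parallel transversals being supplied by the $\LHC(m,n)$ machinery from the introduction.

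\textbf{Parts 3 and 4.} The lower bound $\rank(f)\geq n^{m-1}$ is immediate from the second opening observation. For the matching upper bound it suffices to show that every fibre of a singular $f$ has exactly $n$ elements, i.e.\ is a maximum independent set of $\overline\Gamma$ -- equivalently, a $1$-layer of $\Gamma$. If $f(u)=f(v)$ with $u\neq v$, then $u\not\sim v$ in $\overline\Gamma$, so $d_H(u,v)=1$ and $u,v$ share a unique $1$-layer $L$ of $\Gamma$. The key rigidity step is to propagate this single coincidence along $L$ and then across the system of parallel $1$-layers containing $L$: using the homomorphism property against many $\overline\Gamma$-edges incident with $u$ and $v$, one forces every vertex of $L$ to share the same image under $f$, and then repeats the argument across parallel layers. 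This realises the fibre partition of $f$ as a system of parallel $1$-layers, giving $\rank(f)=n^{m-1}$ with uniform fibres of size $n$. Part 4 is analogous, with maximum cliques of $\Delta$ in place of $1$-layers of $\Gamma$.

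\textbf{Main obstacle.} The real work is in the rigidity claims: for Parts 1--2, showing $\im f$ must be a $k$-layer rather than an exotic induced subgraph with complete core; for Parts 3--4, showing that a single collapsed pair forces a saturated fibre. Both hinge on the Cartesian-product / Latin-hypercube structure of Hamming graphs and on the preservation of parallelism of maximum cliques under endomorphisms, which is where the layer counts $h_k(m,n)$ and the $\LHC$ machinery of the introduction genuinely do the work.
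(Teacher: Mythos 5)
Your outline for Parts 1, 3 and 4 follows essentially the same road as the paper (clique preservation forces maximal cliques onto maximal cliques, the fibre partition is a proper colouring so $\rank(f)\geq\chi$, and a propagation argument turns one collapsed pair into a whole collapsed layer), but the two rigidity claims on which everything rests are asserted rather than proved, and you say so yourself under ``Main obstacle''. The paper closes exactly these gaps: for Part 1 it proves by induction on the layer dimension (Lemma \ref{lemmaonhypercubes}) that the image of a $k$-layer is a $d$-layer with $d\leq k$, by splitting a $(k+1)$-layer into $n$ parallel $k$-layers and showing that if two of their images had different dimensions some connecting line could not map to a line; for Part 3 it shows that if $\phi(a)=\phi(b)$ with $a\neq b$ then $a,b$ lie on a common $1$-layer and every point $x$ on a line through $a$ avoiding $b$ must land in the image clique (an adjacency count: $x$ misses exactly $m$ points of $c_1$ but is adjacent to $b$), after which the coincidence spreads to everything. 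Your claim that ``the set $I$ of directions\dots is independent of $v$, setting $k=|I|$ identifies $\im f$ with a $k$-layer'' is precisely the statement that needs the inductive argument; as written it is a restatement of the conclusion.

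The more serious problem is Part 2. You say it ``proceeds identically with `$1$-layer' replaced by `maximum clique of $\Delta$','' but the two graphs are structurally incompatible for that substitution: in $\Gamma$ the maximal cliques \emph{are} the $1$-layers, whereas in $\Delta=H(m,n,m)$ the maximal cliques are transversals $\{(g_1i,\dots,g_{m-1}i,i)\}$ and the layers are \emph{cocliques}. The kernel classes of a singular endomorphism of $\Delta$ turn out to be parallel $d$-layers (so the image is not a $k$-layer, which is why the theorem statement omits that clause for $\Delta$), and the paper needs a genuinely different two-stage argument: first Lemma \ref{lemmacollapselayer}, an induction on the Hamming distance of a collapsed pair showing the minimal layer containing them collapses to a point, and then a construction of two maximal cliques $c_1,c_2$ agreeing in all but one vertex to force $\phi(z_2)=\phi(y_2)$ and propagate the collapse to all parallel layers. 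None of this is recoverable from ``clique-invariance plus LHC machinery,'' so Part 2 of your proposal has a real gap, not just an unexpanded routine step.
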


The next result constitutes that the singular endomorphisms of $H(m,n)$ are induced by Latin hypercubes.

\begin{theorem}\label{thm3}
 The number of singular endomorphisms of $H(m,n)$ of rank $n^k$, for $1\leq k\leq m-1$, is given by the formula
\begin{equation*}
\binom{m}{k}\cdot n^{m-k}\cdot k!\cdot \left( \sum\limits_{\substack{P \text{ partition of } \{1,...,m\} \\ \text{with } k \text{ parts}}} \prod\limits_{X \in P} \#\LHC(|X|,n) \right),
\end{equation*}
where the product runs over all parts in $P$; $|X|$ is the size of the part $X\in P$ and $\#\LHC(d,n)$ is the number of Latin hypercubes of dimension $d$ of order $n$ (and class $1$).
\end{theorem}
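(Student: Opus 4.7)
The plan is to establish an explicit bijection between singular endomorphisms of $H(m,n)$ of rank $n^k$ and quadruples $(L,P,\sigma,(g_X)_{X\in P})$, where $L$ is a $k$-layer (the image), $P$ is an unordered partition of $\{1,\ldots,m\}$ into $k$ parts, $\sigma\colon\{1,\ldots,k\}\to P$ is a bijection, and $g_X\in\LHC(|X|,n)$ is a Latin hypercube on each part. Reading off the sizes of these four factors will give the stated formula.

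I would start by invoking Theorem~\ref{thm2}(1): the image of such an $f$ is a $k$-layer, and there are exactly $h_k(m,n)=\binom{m}{k}n^{m-k}$ of them, accounting for the first two factors. Fixing an image $k$-layer $L$ and identifying $L$ with $\mathbb{Z}_n^k$ via its $k$ free coordinates, I would write $f=(g_1,\ldots,g_k)$ with $g_i\colon\mathbb{Z}_n^m\to\mathbb{Z}_n$.

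The central step is to extract a partition of the domain coordinates from $f$. Every column $C_j(x)=x+\langle e_j\rangle$ of $H(m,n)$ is a copy of $K_n$, so $f|_{C_j(x)}$ is an injective graph homomorphism into $L\cong H(k,n)$ whose image is a maximum clique of $L$, hence a $1$-layer of $L$ in some direction $\phi(j,x)\in\{1,\ldots,k\}$. I would then show that $\phi(j,x)$ does not depend on $x$ by comparing two parallel columns $C_j(x),C_j(x')$ differing at a single coordinate $j'\ne j$: the $n$ edges running between them would force, if the image directions were distinct $i\ne i'$, one of the two column images to be constant in the $i$- or $i'$-coordinate of $L$, contradicting the established bijection onto a full $1$-layer. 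This is the main obstacle of the proof. Once $\phi\colon\{1,\ldots,m\}\to\{1,\ldots,k\}$ is well defined, surjectivity follows from $|\im f|=n^k$ (otherwise the image would lie in a proper sub-Hamming graph of $L$), and so $\{X_i=\phi^{-1}(i)\}$ forms a partition of $\{1,\ldots,m\}$ into $k$ nonempty parts.

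Finally, I would observe that $g_i$ depends only on the coordinates in $X_i$, because moving along any edge in a direction $j\notin X_i$ leaves $g_i$ unchanged by construction of $\phi$. The column bijection then says that the restriction of $g_i$ to $\mathbb{Z}_n^{X_i}$ takes every symbol of $\mathbb{Z}_n$ exactly once on every $1$-layer, which is precisely the defining property of a member of $\LHC(|X_i|,n)$. Conversely, any data $(L,P,\sigma,(g_X)_{X\in P})$ of the stated form glues back to a valid endomorphism with image $L$, yielding the inverse of the bijection. Summing over unordered partitions of $\{1,\ldots,m\}$ with $k$ parts (each giving $k!$ ordered refinements) and multiplying by the number of image layers and of Latin hypercubes per part then yields the formula of Theorem~\ref{thm3}.
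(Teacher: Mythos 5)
Your proposal is correct and follows essentially the same route as the paper: choose the image $k$-layer ($\binom{m}{k}n^{m-k}$ ways), show that the coordinate functions of the endomorphism depend on pairwise disjoint sets of domain coordinates because maximal cliques (lines) must map to lines, so the $m$ coordinates are partitioned into $k$ parts each carrying a Latin hypercube, with $k!$ ways to match parts to image coordinates. You are in fact somewhat more careful than the paper, which does not explicitly verify that the direction of the image of a column is independent of the base point, nor that every quadruple $(L,P,\sigma,(g_X))$ glues back to a valid endomorphism.
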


Similarly, we obtain the corresponding results for other Hamming distances and hypercuboids.

\begin{theorem}\label{thm4}
 Let $S=\{1,...,k\}$. The singular endomorphisms of $H(m,n,S)$ are uniform and have rank $n^{d}$ with image a $d$-layer, for some $k \leq d \leq m-1$. Moreover, if they are of minimal rank, then they are induced by Latin hypercubes of class $k$.
\end{theorem}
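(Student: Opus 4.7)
My plan is to follow the same general strategy used for Theorem~\ref{thm2}(1), exploiting the interplay between maximal cliques and graph endomorphisms. The decisive structural fact is that the maximal cliques of $H(m,n,S)$ with $S=\{1,\dots,k\}$ are exactly the $k$-layers, each of size $n^{k}$. The argument then proceeds in three stages: (i) describe how an endomorphism acts on a single $k$-layer, (ii) pin down the image as a $d$-layer and derive uniformity, and (iii) identify the minimal-rank case with Latin hypercubes of class $k$.

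For the first stage, I would verify that any two vertices inside a $k$-layer differ in at most $k$ coordinates, so a $k$-layer is a clique; a short coordinate-counting argument rules out any clique of $H(m,n,S)$ of size larger than $n^{k}$. Since a graph endomorphism is automatically injective on every clique, every endomorphism $f$ maps each $k$-layer bijectively onto another $k$-layer. In particular every singular endomorphism satisfies $\rank(f)\geq n^{k}$, and $\im(f)$ is a union of $k$-layers.

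In the second stage I show that $\im(f)$ is in fact a single $d$-layer for some $k\leq d\leq m-1$. Fix a parallel class $\mathcal{C}$ of $k$-layers, which partitions $\mathbb{Z}_n^{m}$ into $n^{m-k}$ pieces; each piece is sent bijectively by $f$ to a $k$-layer, and together these images form $\im(f)$. By comparing two image layers $L_1,L_2$ coming from $\mathcal{C}$, and tracking the coordinate directions in which a preimage point of $L_1$ can be adjacent to a preimage point of $L_2$, one forces $L_1$ and $L_2$ to lie inside a common $(k+1)$-layer; iterating yields a $d$-layer $L$ containing $\im(f)$, and minimality then gives $\im(f)=L$. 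This gluing argument is the main technical obstacle, as one must combine adjacency constraints coming from all $\binom{m}{k}$ parallel classes of $k$-layers simultaneously and rule out ``mixed'' image configurations. Once $\im(f)=L$ is established, uniformity follows from a straightforward counting: each $k$-layer in $\mathcal{C}$ meets every fibre of $f$ in at most one point, so $|\mathbb{Z}_n^{m}|=n^{m}$ and $|L|=n^{d}$ together force every fibre to have size exactly $n^{m-d}$.

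In the third stage, assume $f$ has minimal rank, so $\im(f)=L$ is itself a $k$-layer. By the first stage, $f$ restricts to a bijection from every $k$-layer onto some $k$-layer contained in $L$, and the only $k$-layer contained in $L$ is $L$ itself; thus $f$ carries every $k$-layer bijectively onto $L$. Identifying $L$ with an alphabet of $n^{k}$ symbols, this is precisely the defining property of a Latin hypercube $\LHC(m,n,k)$: each $k$-layer of the $m$-dimensional array receives each symbol exactly once. Conversely, any such hypercube plainly produces a minimal-rank endomorphism, and so minimal-rank singular endomorphisms correspond bijectively, up to the choice of $L$, to Latin hypercubes of class $k$, completing the proof.
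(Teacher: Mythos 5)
Your overall shape is right and stage (iii) matches the paper's argument, but stage (ii) contains two genuine gaps. First, the gluing step is exactly the hard part and you have not supplied it: knowing that the images of the $k$-layers in one parallel class pairwise sit inside common $(k+1)$-layers does not let you ``iterate'' to conclude that their union is a single $d$-layer, and even if you produce a minimal layer $L$ containing $\im(f)$, minimality of $L$ does not give $\im(f)=L$ (a proper subset of a layer can still have $L$ as its minimal enclosing layer). The paper avoids both problems by proving the stronger statement that the image of \emph{every} $s$-layer is exactly a layer of some dimension $d$ with $k\le d\le s$, by induction on $s$: an $(s+1)$-layer is split into $n$ parallel $s$-layers, the connecting maximal cliques force their images to have equal dimension and to stack into a $(d+1)$-layer (as in Lemma~\ref{lemmaonhypercubes}). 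Applying this to the full $m$-layer gives the image statement outright; your single-parallel-class gluing discards the hierarchical structure that makes the induction go through.

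Second, your uniformity count does not close when $d>k$. Injectivity on cliques gives each fibre at most one point per $k$-layer of $\mathcal{C}$, hence fibre size at most $n^{m-k}$; with $n^{d}$ fibres summing to $n^{m}$ this forces equal fibres only if the upper bound equals the average $n^{m-d}$, i.e.\ only in the minimal-rank case $d=k$. For $d>k$ you need the uniformity to be carried along the induction (each $s$-sublayer is collapsed uniformly onto its image, as the paper notes inside the proof of Lemma~\ref{lemmaonhypercubes}), or some other argument; the averaging alone is insufficient. Your stage (i) and the identification of minimal-rank endomorphisms with Latin hypercubes of class $k$ are fine and agree with the paper.
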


\begin{theorem}\label{thm5}
\begin{enumerate}
 \item The singular endomorphisms of $H(n_1,...,n_m)$ are uniform of rank $n_1\cdot \prod\limits_{i\in I} n_i$, where $I$ is a proper subset of $ \{n_2,...,n_m\}$.
\item The singular endomorphisms of $H(n_1,...,n_m,S)$, for $S=\{1,...,k\}$, of minimal rank $n_1\cdots n_k$ are Latin hypercuboids of class $k$.
 \end{enumerate}
\end{theorem}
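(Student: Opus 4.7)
The plan is to extend the arguments developed for Theorems \ref{thm2}--\ref{thm4} to the cuboidal setting $n_1\geq n_2\geq\cdots\geq n_m$, where the asymmetry between coordinates must be tracked carefully. The crucial structural fact is that $H(n_1,\ldots,n_m)$ is still a Cartesian product $K_{n_1}\sq K_{n_2}\sq\cdots\sq K_{n_m}$, its maximal cliques are the $1$-layers (of varying sizes $n_1,\ldots,n_m$), and both its clique number and chromatic number equal $n_1$. Consequently, any singular endomorphism $f$ is forced to send each clique of maximum size $n_1$ bijectively onto some maximal $n_1$-clique in the image.

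For part (1), I would first show that $\im(f)$ is a coordinate-aligned layer $a+\langle e_j:j\in J\rangle$ for some $J\subseteq\{1,\ldots,m\}$. Because $\im(f)$ must contain a clique of size $n_1$, the set $J$ must include some index $j^{*}$ with $n_{j^{*}}=n_1$; ordering the coordinates so that $1\in J$, we write $J=\{1\}\cup I$ with $I\subseteq\{2,\ldots,m\}$, and $I$ is proper precisely because $f$ is singular rather than an automorphism. This gives the rank formula $n_1\cdot\prod_{i\in I}n_i$. Uniformity of the kernel then follows the same way as in the cubical case treated in Theorem \ref{thm2}: the preimage of any $n_1$-clique in $\im(f)$ is itself partitioned into $n_1$ equal-size blocks by the clique-preservation property, and these blocks propagate across the whole vertex set via the product decomposition.

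For part (2), once part (1) is in hand and the rank is taken to be minimal, the image is a $k$-layer of dimensions $n_1\times\cdots\times n_k$ (after relabelling compatible with the ordering). Reading $f$ as a labelling of the $n_1\times\cdots\times n_m$ array by the $n_1\cdots n_k$ vertices of this $k$-layer, the homomorphism condition on $H(n_1,\ldots,n_m,S)$ with $S=\{1,\ldots,k\}$ translates directly into the defining property of a Latin hypercuboid $\LHC(n_1,\ldots,n_m,k)$: each symbol must appear exactly once in every $k$-layer with $n_1\cdots n_k$ entries and at most once in any shorter $k$-layer. The converse is immediate, since a hypercuboid of class $k$ yields such an endomorphism by construction.

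The hard step will be proving that $\im(f)$ is genuinely a coordinate-aligned layer, rather than some more exotic clique-regular subgraph of $H(n_1,\ldots,n_m)$. I expect to handle this by exploiting the unique prime-factorisation of connected graphs under the Cartesian product together with an inductive projection argument: pick a direction in which $\im(f)$ does not fully vary, quotient by that direction, and reduce to a Hamming (sub)graph of smaller dimension. A subtlety arises when several of the $n_i$ coincide with $n_1$, since then the distinguished large coordinate in $J$ is not unique; this is resolved by fixing the convention $1\in J$ compatible with $n_1\geq\cdots\geq n_m$, which is where the asymmetry of the statement — forcing the factor $n_1$ to appear in the rank — genuinely gets used.
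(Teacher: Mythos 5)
Your skeleton matches the paper's: establish that the image of a singular endomorphism is a coordinate-aligned layer one of whose sides has length $n_1$, read off the rank $n_1\cdot\prod_{i\in I}n_i$, get uniformity by the same argument as in the cubical case, and then identify the minimal-rank kernels with Latin hypercuboids of class $k$. However, the step you yourself flag as ``the hard step'' --- that $\im(f)$ is genuinely a layer --- is exactly the content of the paper's Lemma \ref{lemmaonhypercuboids}, and your proposed tool for it is not the right one. Unique prime factorisation under the Cartesian product applies to graphs already presented as products; the image of an arbitrary endomorphism is a priori just some induced subgraph (it is not even a retract unless $f$ is idempotent or you pass to the eventual image of $f^N$), so there is nothing to factorise until you have already proved what you are trying to prove. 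The paper instead runs an induction on the dimension of a layer: split a $(k+1)$-layer (with a side of length $n_1$) into parallel $k$-layers $l_1,\dots,l_{n_{i_{k+1}}}$, apply the induction hypothesis to each, and use the two-connecting-cliques argument of Lemma \ref{lemmaonhypercubes} (Figure \ref{figureforH(m,n)}) to force all the $\phi(l_i)$ to have the same dimension and to stack into a layer of one dimension higher. The second half of your plan (``quotient by a direction and reduce to smaller dimension'') is essentially this, so the right fix is to drop the factorisation idea and carry out that induction.

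A second, substantive omission: the claim is simply false without the hypothesis $n_1,\dots,n_{m-1}\geq 3$, $n_m\geq 2$, which the paper imposes before stating the result. If two of the $n_i$ equal $2$, collapsing the two diagonal vertices of a $2\times 2$ subarray gives a non-uniform singular endomorphism whose image is not a layer (already $K_2\sq K_2=C_4$ retracts onto a path of three vertices). Your argument as written never uses any lower bound on the $n_i$, so it cannot be complete; you need to identify where $n_i\geq 3$ enters (it is what makes the connecting-cliques contradiction available in every splitting direction). Your observation that the image must contain an $n_1$-clique because cliques of maximum size map injectively, forcing the factor $n_1$ in the rank, is correct and is the same device the paper uses by always keeping a side of length $n_1$ in the layers of Lemma \ref{lemmaonhypercuboids}; and your reading of part (2), translating the homomorphism condition for $S=\{1,\dots,k\}$ into the defining property of $\LHC(n_1,\dots,n_m,k)$, agrees with the paper's Lemma \ref{lemmalatinhypercuboids}.
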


\section{The Hamming Graph and its Complement}\label{section2}

\subsection{Endomorphisms of the Hamming Graph $H(m,n)$}

In dimension $m=2$ the Hamming graph $H(2,n)$ is identified as either the square lattice graph $L_2(n)$ or the orthogonal array graph $OA(2,n)$. However, in \cite{godsil11} Godsil and Royle determined the singular endomorphisms of this graph. They showed that the singular endomorphisms are $n$-colourings; however, $n$-colourings are Latin squares by \cite[Thm.~ 10.4.5]{godsilbook}.

\begin{proposition}
 The number of proper endomorphisms of $L_2(n)$ is
\begin{equation}
\# \text{ maximal cliques } \cdot \# \text{ Latin squares of order } n.
\end{equation}
Note that $L_2(n)$ has $2n$ distinct maximal cliques.
\end{proposition}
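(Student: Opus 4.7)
The plan is to reduce the count to two independent choices: the image of the endomorphism and the labelling compatible with it. First I would invoke the result recalled just above the proposition, due to Godsil and Royle, which says that every singular (equivalently, proper, i.e.\ non-bijective) endomorphism $f$ of $L_2(n)=H(2,n)$ is a proper $n$-colouring of $L_2(n)$. In particular, the image of $f$ is a clique of size $n$, and hence is a maximal clique of $L_2(n)$. Since the maximal cliques of $H(2,n)$ are exactly the $1$-layers, the formula $h_1(2,n)=\binom{2}{1}n^{2-1}=2n$ from the introduction gives $2n$ possible images, namely the $n$ rows and the $n$ columns of the $n\times n$ grid.

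Next, for a fixed maximal clique $K$ I would count the endomorphisms $f$ with image contained in $K$. Any such $f$ is a map $\mathbb{Z}_n^{2}\to K$ sending adjacent vertices to distinct (hence adjacent, since $K$ is a clique) vertices. Because the rows and columns of $\mathbb{Z}_n^{2}$ are themselves cliques of size $n$ in $H(2,n)$, this forces $f$ to be injective on every row and every column of the source. Identifying the $n$ vertices of $K$ with symbols, the map $f$ is precisely an $n\times n$ array on $n$ symbols in which every row and every column is a permutation, i.e.\ a Latin square of order $n$. Conversely, any such Latin square yields a valid proper endomorphism whose image is exactly $K$ (surjectivity onto $K$ is automatic from the Latin square property). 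Hence the number of proper endomorphisms with image $K$ equals $\#\LHC(2,n)$, the number of Latin squares of order $n$.

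Finally, since the image is an invariant of the endomorphism, the sets of endomorphisms with different images are disjoint; summing over the $2n$ maximal cliques gives the product formula. The only technical point is the bijection between proper $n$-colourings of $L_2(n)$ with colour set $K$ and Latin squares of order $n$ with symbol set $K$, but this is really just the content of \cite[Thm.~10.4.5]{godsilbook} cited in the introduction and used already in the Godsil--Royle characterisation, so I do not expect any substantial obstacle.
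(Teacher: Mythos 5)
Your argument is correct and follows the same route the paper intends: the paper derives the proposition directly from the Godsil--Royle result that singular endomorphisms of $L_2(n)$ are $n$-colourings, together with \cite[Thm.~10.4.5]{godsilbook} identifying $n$-colourings with Latin squares, and you simply make explicit the bookkeeping (image is one of the $2n$ maximal cliques, and for each fixed clique the compatible maps are exactly the Latin squares of order $n$). No gaps; the disjointness of the count over images and the automatic surjectivity onto the chosen clique are exactly the points worth spelling out, and you do.
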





To show that the singular endomorphisms are uniform in higher dimensions we essentially follow the same strategy in the proof. The first part of Theorem \ref{thm2} is the following.

\begin{theorem}\label{theoremendomorphismsofH(m,n)}
A singular endomorphism of $H(m,n)$ is uniform of rank $n^k$, for some $1\leq k\leq m-1$, and its image is a layer of dimension $k$.
\end{theorem}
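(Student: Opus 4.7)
The plan is to analyse $f$ through its action on the maximal cliques of $H(m,n)$. The crucial first observation is that every graph homomorphism $K_n\to K_n$ is automatically injective: if $f(x)=f(y)$ for distinct $x,y$ in $K_n$, the edge $xy$ would have to map to a loop, which is forbidden. Since every clique of $H(m,n)$ lies inside a 1-layer and every 1-layer is a maximal clique isomorphic to $K_n$, this forces the restriction of $f$ to each 1-layer $L$ to be injective, so $f(L)$ is again a 1-layer. Thus $f$ induces a well-defined map $\phi$ on the set of 1-layers of $H(m,n)$.

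The central technical step is a parallelism lemma: if $L_1,L_2$ are distinct parallel 1-layers (both in direction $i$), then $\phi(L_1)$ and $\phi(L_2)$ are either equal or parallel. It suffices to prove this when $L_1,L_2$ are \emph{adjacent}, i.e.\ connected by a perfect matching of $n$ edges obtained by varying a single second coordinate; the general case follows by chaining such pairs together. In the adjacent case each matching edge is sent by $f$ to an edge of $H(m,n)$ between $\phi(L_1)$ and $\phi(L_2)$. A direct edge count shows that two 1-layers of $H(m,n)$ in distinct directions can be joined by at most $2(n-1)$ edges (all incident to their unique common vertex, when they share one) or by at most one edge (when they are disjoint); in either situation a matching of $n$ vertex-disjoint edges cannot be realised once $n\ge 2$, forcing $\phi(L_1)$ and $\phi(L_2)$ to lie in the same direction.

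Once parallelism is preserved, there is a well-defined direction map $\rho:\{1,\ldots,m\}\to\{1,\ldots,m\}$ sending $i$ to the common direction of $\phi(L)$ for all 1-layers $L$ in direction $i$. Set $k=|\im\rho|$. The key consequence is that, for any vertex $u$ and any direction $\ell$, moving $u$ along direction $\ell$ changes $f(u)$ only in coordinate $\rho(\ell)$. Iterating from a fixed starting vertex $u_0$, $f(u)$ differs from $f(u_0)$ only in coordinates belonging to $\im\rho$, so the image of $f$ lies in the $k$-layer through $f(u_0)$ spanned by the directions in $\im\rho$; the reverse inclusion is immediate since the image contains a full 1-layer in each such direction and one may move freely within them. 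For uniformity, the coordinate $j\in\im\rho$ of $f(u)$ depends only on the coordinates of $u$ indexed by $\rho^{-1}(j)$, and this dependence is bijective in each one of those coordinates (by the $K_n$-injectivity above), giving a Latin hypercube of dimension $|\rho^{-1}(j)|$ in which each symbol occurs $n^{|\rho^{-1}(j)|-1}$ times. Multiplying across $j\in\im\rho$, every fibre of $f$ has exactly $n^{m-k}$ elements.

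The main obstacle is the parallelism lemma, which requires the careful edge-count between two 1-layers of $H(m,n)$ in different directions. After that is in place, the remaining argument is coordinate bookkeeping, and it also sets up naturally the Latin hypercube structure used in Theorem \ref{thm3}.
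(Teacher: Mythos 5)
Your argument is essentially sound for $n\geq 3$, but it takes a genuinely different route from the paper. The paper proves, by induction on the dimension of layers, that a singular endomorphism sends every $k$-layer to a layer of dimension at most $k$ (Lemma \ref{lemmaonhypercubes}, via a pictorial ``connecting cliques'' argument), and then applies this to the full $m$-layer; uniformity is asserted along the way. You instead work entirely at the level of $1$-layers: clique-injectivity gives an induced map on lines, the parallelism lemma gives a well-defined direction map $\rho$, and both the shape of the image and the exact fibre size $n^{m-k}$ are then read off from $\rho$ by coordinate bookkeeping. Your version is more explicit about where uniformity comes from (the product of Latin-hypercube fibres over $j\in\im\rho$) and it directly exhibits the partition of coordinates and the per-coordinate Latin hypercubes that Theorem \ref{thm3} needs, so it arguably does more work in one pass than the paper's proof.

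There is, however, one genuine error: in the parallelism lemma you claim the forbidden configuration ``cannot be realised once $n\ge 2$.'' In the intersecting case the two image lines are joined by $2(n-1)$ edges all through the common vertex $v$, and your matching forces at most two of the $n$ image edges to pass through $v$; this is a contradiction only for $n\ge 3$. For $n=2$ the configuration is realisable, and in fact the theorem itself fails there: on the $3$-cube $H(3,2)$ the map sending $000\mapsto 000$, $001\mapsto 001$, $010,100,111\mapsto 100$ and $011,101,110\mapsto 000$ is a singular endomorphism of rank $3$ whose image is a path, not a layer, and whose kernel classes have sizes $4,3,1$. So you (and the statement) should assume $n\ge 3$; with that restriction your proof goes through. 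Two smaller points worth tightening: the images of your $n$ matching edges need not be vertex-disjoint (only the two endpoint families are separately distinct), so the contradiction must be phrased via ``every such edge passes through $v$'' rather than via a raw edge count, which you essentially do; and the reverse inclusion showing the image is the whole $k$-layer deserves one sentence of induction over the directions in $\im\rho$.
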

%
%

This theorem is a consequence of the following lemma.

\begin{lemma}\label{lemmaonhypercubes}
 Let $\phi$ be a singular endomorphism of $H(m,n)$, and let $l$ be a $k$-layer. Then $\phi(l)$ is a layer of dimension $d$, where $1\leq d \leq k$.
\end{lemma}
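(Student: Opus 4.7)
The plan is to proceed by strong induction on $k$. For the base case $k=1$, a $1$-layer $l$ is a maximal clique $K_n$ in $H(m,n)$, so $\phi|_l$ is injective and $\phi(l)$ is an $n$-clique. Any three pairwise adjacent vertices of $H(m,n)$ must differ in one common coordinate, so the $n$-cliques are exactly the $1$-layers, forcing $\phi(l)$ to be a $1$-layer.

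The second base case $k=2$ I would handle directly by a $4$-cycle calculation. Parametrise $l = \{(i,j,0,\ldots,0) : i,j \in \mathbb{Z}_n\}$ and write $w_{i,j} = \phi(i,j,0,\ldots,0)$. By the $k=1$ case each row has image a $1$-layer in some direction $e_{p_j}$, and each column has image a $1$-layer in some direction $e_{q_i}$. The $4$-cycle $w_{0,0}, w_{1,0}, w_{1,1}, w_{0,1}$ gives an identity of the form $\alpha e_{p_0} + \beta e_{q_1} = \gamma e_{q_0} + \delta e_{p_1}$ with non-zero scalars, and the requirement that each side be supported on a single coordinate forces $\{p_0,q_1\} = \{p_1,q_0\}$ as multisets. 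Running the argument over all $4$-cycles in the grid rigidly constrains all $p_j$ to a common value $p$ and all $q_i$ to a common value $q$, with either $p \neq q$ (yielding a $2$-layer image) or $p = q$ (causing a collapse to a single $1$-layer).

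For $k \geq 3$, assume the lemma for all smaller dimensions and take $l = \langle e_{i_1}, \ldots, e_{i_k}\rangle$ after translation. Slice $l$ perpendicular to $e_{i_k}$ into parallel $(k-1)$-layers $l_j = j e_{i_k} + \langle e_{i_1}, \ldots, e_{i_{k-1}}\rangle$. By the inductive hypothesis each $\phi(l_j)$ is an axis-aligned layer with direction set $D_j \subseteq \{1,\ldots,m\}$. For each $r < k$ and $v \in l_0$, the $2$-sub-layer $P_{v,r} = v + \langle e_{i_r}, e_{i_k}\rangle$ is itself a $2$-layer of $H(m,n)$, so by the $k=2$ case $\phi(P_{v,r})$ is a layer of dimension at most $2$. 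Since $P_{v,r}$ intersects each $l_j$ in a $1$-sub-layer in direction $e_{i_r}$ and contains the transverse line $M_v = v + \langle e_{i_k}\rangle$, requiring $\phi(P_{v,r})$ to be a layer forces the direction in which $\phi$ stretches the $e_{i_r}$-component of $l_j$ to be common across all $j$, and forces the transverse line directions $e_{p_v}$ to agree. Varying $r$ over $\{1,\ldots,k-1\}$ and $v$ over $l_0$ pins down a single direction set $D$ with $D_j = D$ for all $j$ and a single transverse direction $e_p$, so $\phi(l)$ lies in (and fills, by a short count) the axis-aligned layer with direction set $D \cup \{p\}$, of dimension at most $k$.

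The principal obstacle is the $k=2$ base case: ruling out "twisted" configurations with incompatible row and column directions is the rigidity heart of the argument. Once it is in hand, the step to general $k$ is essentially systematic bookkeeping, propagating the local $2$-dimensional rigidity through the grid of $(k-1)$-slices and transverse lines that make up $l$.
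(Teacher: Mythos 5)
Your proposal is correct and follows essentially the same route as the paper: induction on the layer dimension, slicing a $k$-layer into $n$ parallel $(k-1)$-layers, applying the inductive hypothesis to each slice, and forcing the slice images to align into a single layer. The only difference is local and one of explicitness: where the paper rules out misaligned slices by the qualitative ``two connecting cliques, at least one of which cannot map to a maximal clique'' argument of its ``impossible configuration'' figure, you carry out the same rigidity via the $4$-cycle support computation in the $k=2$ case and propagate it through the $2$-dimensional sub-layers $P_{v,r}$, thereby supplying detail (why the slice images are parallel and stack into a layer) that the paper's proof asserts rather than proves.
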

\begin{proof}
 We will use induction on $m$ and $k$. Let $A(m,k)$ be the hypothesis. The hypothesis is satisfied for the following initial values $A(2,1),A(2,2)$ and $A(m,1)$ are true. Assume the hypothesis holds for $A(m,k)$ and show it holds for $A(m,k+1)$. 
 
 Let $l$ be a $(k+1)$-layer. Then, we can split $l$ into parallel $k$-layers $l_1,...,l_n$. By induction $\phi(l_i)$ is a $k$-layer or a layer of smaller dimension, for all $i$. Now, if the dimensions of, say, $\phi(l_1)$ and $\phi(l_2)$ would differ, then there would be two maximal cliques (lines) $c_1$ and $c_2$ connecting $l_1$ and $l_2$ such that at least one of $\phi(c_1)$ and $\phi(c_2)$ would not be a line in the image of $\phi$ (cf. Figure \ref{figureforH(m,n)}). A contradiction. Therefore, all $\phi(l_i)$ have the same dimension, say $d$.
 
 Using the same argument, we see that each $l_i$ is collapsed to $\phi(l_i)$, and that the layers $\phi(l_i)$ must form a $(d+1)$-layer. Thus, the image $\phi(l)$ is a $(d+1)$-layer. Note, each $l_i$ is collapsed to $\phi(l_i)$ uniformly; otherwise, by essentially the same argument we would be able to find a maximal clique which is not mapped to a maximal clique.
\end{proof}

\begin{figure}[t!]
\begin{center}

\begin{tikzpicture}

\tikzstyle{every node}=[scale=.6]
\draw (0,0) ellipse (1cm and 2cm) node[scale=1,sloped,fill=white] {$l_1$};;
\draw (2.5,0) ellipse (1cm and 2cm) node[scale=1,sloped,fill=white] {$l_2$};

\draw (7.5,-0.1) ellipse (1cm and 1.3cm) node[scale=1,sloped,fill=white] {$\phi(l_1)$};;
\draw (10,0) ellipse (1cm and 2cm) node[scale=1,sloped,fill=white] {$\phi(l_2)$};;
\node (9) at (1.25,1.25) {$c_1$};
\node (10) at (1.25,-1.25) {$c_2$};
\draw[->] (4,0) -- (6,0) node[pos=0.5,sloped,above,scale=1.5,fill=white] {$ \phi $};
\node (16) at (10,1.35) {$\phi(c_1)$};
\node (16) at (8.65,-1.35) {$\phi(c_2)$};

\tikzstyle{every node}=[scale=.5,circle,fill=black]
\node (1) at (0,1) {};
\node (2) at (0,-1) {};
\node (3) at (2.5,1) {};
\node (4) at (2.5,-1) {};
\node (5) at (-1.5,1) {};
\node (6) at (4,1) {};
\node (7) at (-1.5,-1) {};
\node (8) at (4,-1) {};

\draw (1)--(3);
\draw (2)--(4);
\draw (1)--(5);
\draw (6)--(3);
\draw (2)--(7);
\draw (4)--(8);

\node (11) at (7.5,-1) {};
\node (12) at (10,1) {};
\node (13) at (10,-1) {};
\node (14) at (6,-1) {};
\node (15) at (11.5,-1) {};
\node (16) at (11.5,1) {};

\draw[thick] (11)--(14);
\draw (11)--(12);
\draw (11)--(13);
\draw (13)--(15);
\draw (12)--(16);

\end{tikzpicture}
\caption{Impossible configuration}
\label{figureforH(m,n)}
\end{center}
\end{figure}
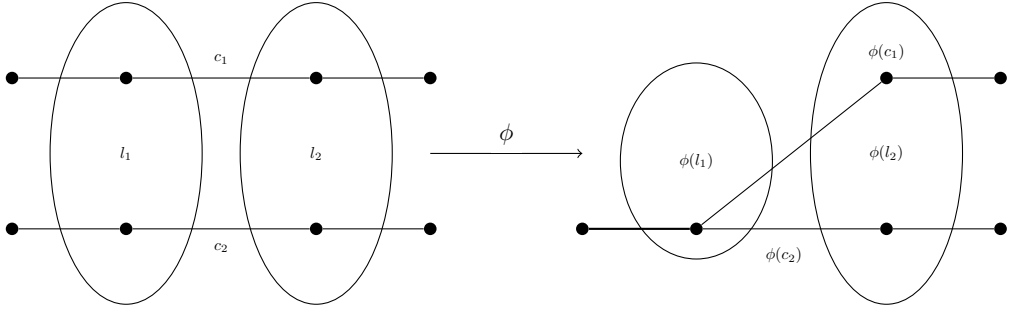
%
%
%

\begin{proof}[Proof of Thm.~ \ref{theoremendomorphismsofH(m,n)}]
Let $\phi$ be a singular endomorphism and let $l$ be the whole $m$-layer. By the previous lemma $\phi(l)$ is a $k$-layer where $1\leq k<m$.
\end{proof}
\begin{corollary}
 For any singular endomorphism $\phi$ there is a maximal number $k$, such that $\phi$ maps $k$-dimensional layers to $1$-dimensional layers.
\end{corollary}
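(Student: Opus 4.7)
The plan is to show that the set
\[K_\phi \;=\; \{\,k \ge 1 \,:\, \phi(l) \text{ is a } 1\text{-layer for every } k\text{-layer } l \text{ of } H(m,n)\,\}\]
is non-empty and bounded above, so that it admits a maximum in $\mathbb{N}$; this maximum is exactly the $k$ claimed by the corollary. The approach is entirely elementary once Lemma \ref{lemmaonhypercubes} is in hand, and splits into two short observations.

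For non-emptiness I would verify that $1 \in K_\phi$. Since $\phi$ is a graph endomorphism it sends adjacent vertices to adjacent (and hence distinct) vertices, and therefore sends every clique of $H(m,n)$ injectively onto a clique of the same cardinality. Each $1$-layer is a clique of size $n$; since $H(m,n) = K_n \sq \cdots \sq K_n$ has clique number exactly $n$ with the maximum cliques being precisely the $1$-layers, $\phi(l)$ must itself be a $1$-layer for every $1$-layer $l$. (This also recovers the $k=1$ case of Lemma \ref{lemmaonhypercubes}.) Hence $1 \in K_\phi$.

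For boundedness the trivial inclusion $K_\phi \subseteq \{1,\ldots,m\}$ suffices, since there are no layers of dimension exceeding $m$; one could refine this to $k \le m-1$ (unless the image of $\phi$ is itself a $1$-layer) by invoking Theorem \ref{theoremendomorphismsofH(m,n)}, but the coarse bound is enough for existence of a maximum. Being a non-empty finite subset of $\mathbb{N}$, $K_\phi$ attains its maximum. The only mildly non-trivial input is the clique-preservation observation used to place $1$ in $K_\phi$; apart from this, the corollary is an immediate cardinality argument on top of Lemma \ref{lemmaonhypercubes}, so I do not expect any genuine obstacle.
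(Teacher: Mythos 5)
Your proposal is correct and matches the paper, which offers no proof at all and treats the corollary as an immediate consequence of Lemma \ref{lemmaonhypercubes}; your two observations (non-emptiness via the fact that $1$-layers are exactly the maximum cliques and hence map to $1$-layers, boundedness by $m$) are precisely the justification the paper leaves implicit. The only caveat is the quantifier: the paper's intended reading is arguably that \emph{some} $k$-layer is collapsed to a line (cf.\ Lemma \ref{lemmapreimageisalatinhypercube}), rather than \emph{every} $k$-layer as in your set $K_\phi$, but your argument (namely $1$ lies in the set and the set is contained in $\{1,\dots,m\}$) establishes the existence of a maximum verbatim under either reading.
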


The following should be also clear.

\begin{lemma}\label{lemmapreimageisalatinhypercube}
 If a singular endomorphism $\phi$ of $H(m,n)$ collapses a $k$-dimensional layer $l$ to a line, then $\phi^{-1}(l)$ is a Latin hypercube.
\end{lemma}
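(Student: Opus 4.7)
The plan is to show that the restriction $\phi|_l$ colours the $k$-layer $l\cong H(k,n)$ using the $n$ vertices of the image line $\phi(l)$ as colours, and that this colouring satisfies the defining property of a Latin hypercube $\LHC(k,n)$: every colour appears exactly once on each axis-parallel line. In other words, the fibre partition of $l$ under $\phi|_l$, viewed as an array on $\mathbb{Z}_n^k$, is a $k$-dimensional Latin hypercube of order $n$ and class $1$.

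First I would identify $l$ with the Cartesian product $K_n\sq\cdots\sq K_n$ ($k$ factors), so that the maximal cliques of $l$ are exactly its $1$-layers; call these axis-parallel lines $l'$. Second, I would fix a labelling of the $n$ vertices of $\phi(l)$ by symbols $\{1,\ldots,n\}$. The core step is then to show that for every axis-parallel line $l'\subseteq l$, the restriction $\phi|_{l'}\colon l'\to \phi(l)$ is a bijection. This uses two general facts: (i) a graph endomorphism sends cliques to cliques, so $\phi(l')$ is a clique of $\phi(l)\cong K_n$, and (ii) any homomorphism $K_n\to K_n$ is injective, because adjacent vertices must map to distinct images. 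Since $l'$ and $\phi(l)$ both have $n$ vertices, injectivity forces bijectivity, so every symbol of $\phi(l)$ occurs exactly once on $l'$. Running this argument over every axis-parallel line of $l$ then yields the Latin hypercube property.

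The main subtlety is the reading of the notation ``$\phi^{-1}(l)$'' in the statement, which I would interpret as the fibre partition of $l$ induced by $\phi|_l$, i.e.\ the $n$-colouring of $l$ whose colour classes are the sets $\phi^{-1}(x)\cap l$ for $x\in\phi(l)$. Once this interpretation is adopted, no real combinatorial obstacle remains; in particular, uniformity of the fibres (each of size $n^{k-1}$) is an immediate consequence of the bijectivity on every axis-parallel line, so no independent uniformity argument is needed.
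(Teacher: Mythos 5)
Your proof is correct; note that the paper offers no proof at all for this lemma, merely prefacing it with ``The following should be also clear,'' so you are supplying the omitted argument rather than diverging from one. Your argument --- each axis-parallel line of $l$ is a clique $K_n$, hence mapped injectively and therefore bijectively onto the $n$-point image line, which is exactly the defining condition for a Latin hypercube of class $1$ (and immediately gives fibres of size $n^{k-1}$) --- is precisely the justification the author intends, and your reading of $\phi^{-1}(l)$ as the fibre partition of $l$ is the sensible resolution of the paper's loose notation.
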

%
\subsection{The Complement of the Hamming Graph}

The complement of the Hamming graph $H(m,n)$ is the graph $H(m,n,S)$, where $S=\{2,...,m\}$, and two vertices are adjacent if their Hamming distance is not $1$. Again, we cover the $2$-dimensional case first. Here, the maximal cliques are given by 
\begin{equation*}
\lbrace (i,gi): i=1,...,n \rbrace \quad \text{for } g\in S_n. 
\end{equation*}

A recent result by David Roberson says that the singular endomorphisms of strongly regular graphs are colourings. Because the square lattice graph is a strongly regular, its complement is strongly regular, as well. So, again by Theorem 10.4.5 in \cite{godsilbook}, its singular endomorphisms are extensions of the corresponding orthogonal array. However, this time we need to interpret these endomorphisms differently, that is they collapse either the rows or the columns. 

\begin{proposition}
 The singular endomorphisms of $\overline{L_2(n)}$ are collapsing rows or columns and their number is $|S_n\wr S_2|=2\cdot (n!)^2$.
\end{proposition}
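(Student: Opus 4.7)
The plan is to combine the Roberson theorem cited just above with a direct analysis of the colour classes of $\overline{L_2(n)}$. Since $L_2(n)$ is strongly regular, so is its complement, and Roberson's result then implies that every singular endomorphism $\phi$ of $\overline{L_2(n)}$ is a proper colouring whose image is a maximum clique. The maximum cliques of $\overline{L_2(n)}$ are exactly the permutation transversals $\{(i,g(i)) : i=1,\ldots,n\}$ for $g\in S_n$, all of size $n$, so $\phi$ is in fact an $n$-colouring whose image is one of these $n!$ cliques.

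The core step is identifying the colour classes, i.e.\ the maximum independent sets of $\overline{L_2(n)}$. Two vertices are non-adjacent in $\overline{L_2(n)}$ iff they agree in at least one coordinate, and a short case analysis shows that any independent set of size at least $2$ must lie entirely in a single row or a single column; consequently the only maximum independent sets are the $n$ rows and the $n$ columns. Since the $n^2$ vertices are partitioned into $n$ classes of size at most $n$, each class has size exactly $n$ and is thus a row or a column. Because every row meets every column in one vertex, the classes cannot be mixed, so $\ker \phi$ is either the full row partition or the full column partition.

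With the kernel pinned down, the count is routine. If $\ker \phi$ is the row partition, $\phi$ factors through the set of rows and is specified by a bijection from that set to a maximum clique; any such bijection is automatically an endomorphism, since distinct rows always contain $\overline{L_2(n)}$-adjacent vertices and the image is already a clique. That gives $n!$ choices of target clique and $n!$ bijections per clique, hence $(n!)^2$ endomorphisms; the column case is symmetric, for a total of $2(n!)^2=|S_n\wr S_2|$. The only non-trivial obstacle is the rigidity lemma for independent sets (showing that rows and columns cannot be combined in a single $n$-colouring); once this is in hand, Roberson's theorem supplies the remaining structural input and the counting is immediate.
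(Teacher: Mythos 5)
Your proof is correct, and it is worth noting that the paper does not actually write out a proof of this proposition: the justification there is the preceding paragraph, which invokes Roberson's result that singular endomorphisms of strongly regular graphs are colourings and then cites Theorem 10.4.5 of Godsil--Royle to conclude that the endomorphisms are ``extensions of the corresponding orthogonal array,'' asserting the row/column interpretation and the count without further argument. You start from the same structural input (Roberson's theorem), but where the paper leans on the cited orthogonal-array theorem you substitute a self-contained rigidity argument: the maximum independent sets of $\overline{L_2(n)}$ are exactly the rows and columns (since an independent set here is a clique of $L_2(n)$, and three vertices pairwise agreeing in a coordinate must all agree in the same coordinate), and since any row meets any column the $n$ colour classes cannot mix, so the kernel is the full row partition or the full column partition. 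You then verify explicitly that every bijection from the class set to a transversal clique is an endomorphism, which is what actually produces the factor $2\cdot(n!)^2$ -- a step the paper omits entirely. The trade-off is the usual one: the paper's version is shorter but opaque without unpacking the citation, while yours is longer, elementary, and actually accounts for the stated number. The only cosmetic caveat is that your size-$\geq 2$ claim about independent sets is only informative for sets of size at least $3$ (size-$2$ independent sets trivially lie in a row or column), but since the colour classes have size exactly $n$ this does not affect the argument for $n\geq 2$.
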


\begin{example}
 Two endomorphisms are the following:
 \[
 \begin{pmatrix}
   2&2&2\\
   4&4&4\\
   9&9&9\\
 \end{pmatrix},\begin{pmatrix}
   4&8&3\\
   4&8&3\\
   4&8&3\\
   \end{pmatrix}.\]
\end{example}

Next, we move on to higher dimensions. Recall, a maximal clique in $\overline{H(2,n)}$ is of the form $\lbrace (gi,i): i=1,...,n \rbrace $ for a permutation $g\in S_n$, and when considering these as $1$-dimensional Latin rows, then the next result says that the maximal cliques of $\overline{H(m,n)}$ form Latin hypercubes (or orthogonal arrays, or MDS-codes).

\begin{lemma}
 The maximal cliques in $\overline{H(m,n)}$ are in $1-1$ correspondence with Latin hypercubes of dimension $m-1$ and order $n$ (and class $1$).
\end{lemma}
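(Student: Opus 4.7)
The plan is to interpret a clique in $\overline{H(m,n)}$ as a code in $\mathbb{Z}_n^m$ of minimum Hamming distance at least $2$, and then identify the cliques of maximum size with graphs of Latin-hypercube functions $L\colon \mathbb{Z}_n^{m-1}\to\mathbb{Z}_n$. The $m=2$ base case already illustrates this picture: the $n!$ maximum cliques, each of the form $\{(i,gi):i\in \mathbb{Z}_n\}$ for some $g\in S_n$, correspond exactly to the $n!$ permutations of $\mathbb{Z}_n$, which are precisely the Latin hypercubes of dimension $1$.

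First I would establish the size bound $|C|\leq n^{m-1}$ (the Singleton bound for minimum distance $2$). Since any two distinct elements of a clique $C$ differ in at least two coordinates, the projection of $C$ onto any fixed choice of $m-1$ coordinates is injective, so $|C|\leq n^{m-1}$; the word ``maximal'' in the statement is then understood as ``of this maximum possible size'', matching the $m=2$ case. Fixing the projection onto the first $m-1$ coordinates, every maximal clique therefore takes the shape $C=\{(x,L(x)):x\in \mathbb{Z}_n^{m-1}\}$ for a unique function $L\colon\mathbb{Z}_n^{m-1}\to \mathbb{Z}_n$.

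Next I would verify that this $L$ is a Latin hypercube of class $1$. Pick any axis-parallel line in $\mathbb{Z}_n^{m-1}$ and suppose $L$ repeated a value at two distinct points $x,y$ of that line. Then the codewords $(x,L(x))$ and $(y,L(y))$ of $C$ would agree in all $m-1$ coordinates except the one along the chosen axis, hence be at Hamming distance exactly $1$, contradicting the clique condition. Thus $L$ takes distinct values along each axis-parallel line, so it is a Latin hypercube of dimension $m-1$, order $n$ and class $1$. Conversely, for any such $L$ the set $C_L:=\{(x,L(x)):x\in \mathbb{Z}_n^{m-1}\}$ has cardinality $n^{m-1}$, and any two distinct elements have Hamming distance at least $2$: if their first $m-1$ coordinates differ in exactly one position, the Latin property forces $L$ to take different values, while if they differ in two or more positions the distance is already $\geq 2$. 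The assignments $C\mapsto L$ and $L\mapsto C_L$ are clearly mutually inverse, which yields the desired bijection.

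The argument is essentially the classical equivalence between MDS codes of minimum distance $2$ and Latin hypercubes, so I do not anticipate a serious obstacle. The main point requiring care is interpretational: one must read ``maximal clique'' as ``clique of maximum size $n^{m-1}$'', justified by the Singleton bound and the $m=2$ precedent. A small aesthetic point is that the bijection depends on singling out the last coordinate as the value of $L$; any other choice of $m-1$ coordinates would work equally well and yield a bijection that is canonical up to permutation of coordinates.
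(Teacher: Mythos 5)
Your proof is correct, but it takes a genuinely different route from the paper's. The paper argues by induction on $m$: it slices a maximal clique $C$ by a system of parallel $(m-1)$-layers, each inducing a copy of $\overline{H(m-1,n)}$, deduces that $C$ meets each slice in exactly $n^{m-2}$ points forming a maximal clique of that slice, and assembles the Latin hypercube from these slices. You instead give a direct, non-inductive coding-theoretic argument: the Singleton-type bound $|C|\leq n^{m-1}$ via injectivity of the projection onto $m-1$ coordinates, the representation of a maximum clique as the graph of a function $L\colon \Z_n^{m-1}\to\Z_n$, and a line-by-line check that the clique condition is exactly the Latin condition on $L$. Your version supplies two things the paper leaves implicit: the upper bound on the clique number (the paper only exhibits a clique of size $n^{m-1}$ and then asserts the clique number), and the explicit mutually inverse maps realizing the bijection. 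Both proofs rest on the interpretational point you flag, namely that ``maximal'' must be read as ``of maximum size $n^{m-1}$''; this reading is genuinely forced, since inclusion-maximal but non-maximum cliques do exist (e.g.\ $\{000,111\}$ in $\overline{H(3,2)}$, and for $n\geq 3$ the non-completable maximal partial Latin squares give examples in $\overline{H(3,n)}$), and the paper's own step ``each layer contains exactly $n^{m-2}$ points of $C$'' already presupposes $|C|=n^{m-1}$. So your explicitness here is a point in your favour rather than a deviation.
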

\begin{proof}
 First, we note that a Latin hypercube is a maximal clique of size $n^{m-1}$. Hence, the clique number is $n^{m-1}$. We are going to use induction on $m$. The case $m=2$ is clear. Let $C$ be a maximal clique in $\overline{H(m,n)}$. Pick a layer system $l_i$ of $(m-1)$-dimensional layers, for $i=1,...,n$. Each layer is a subgraph isomorphic to $\overline{H(m-1,n)}$, so it has clique number $n^{m-2}$. Moreover, each layer contains exactly $n^{m-2}$ points of $C$, since otherwise, if there would be one layer containing at least $n^{m-2}+1$ points of $C$, it would have a maximal clique of size $n^{m-2}+1$, contradicting the induction hypothesis. Therefore, the intersection $C\cap l_i$ is a maximal clique for $\overline{H(m-1,n)}$ and has $n^{m-2}$ points. Intersecting $C$ with all possible layers of dimension $m-1$, determines the coordinates of the points of $C$ and it turns out that $C$ is a Latin hypercube of dimension $m-1$.

\end{proof}

The second item in Theorem \ref{thm2} is the following.

\begin{theorem}
 The graph $\overline{H(m,n)}$ is a pseudo-core, i.e., all singular endomorphisms have rank $n^{m-1}$ and are uniform.
\end{theorem}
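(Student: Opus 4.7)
The plan is as follows. First, I would establish the easy lower bound on $\rank(\phi)$: each fibre $\phi^{-1}(w)$ is an independent set in $\overline{H(m,n)}$, hence a clique in the Hamming graph $H(m,n)$, and every clique in $H(m,n)$ is contained in a $1$-layer, so $|\phi^{-1}(w)| \le n$ and $\rank(\phi) \ge n^{m-1}$. Moreover $\phi$ is injective on any maximum clique $C_0$ (adjacent vertices cannot be identified), and by the preceding lemma $|C_0| = n^{m-1}$, so $\phi(C_0)$ is a maximum clique of size $n^{m-1}$ sitting inside the image.

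The heart of the proof is the matching upper bound $\rank(\phi) \le n^{m-1}$; equivalently, that every fibre is a full line. I would argue by contradiction: suppose some fibre $F$ satisfies $|F| < n$, so $F \subsetneq L$ for a line $L$, and pick $p \in L \setminus F$ with $\phi(p) \ne \phi(F)$. The key input is the incidence property coming from the previous lemma: every maximum clique, being a Latin hypercube of dimension $m-1$, meets every line of $H(m,n)$ in exactly one point. Fixing $u \in F$ and any maximum clique $C$ containing $u$, we have $C \cap L = \{u\}$, so $p \notin C$, and $p$ has exactly $m$ Hamming-neighbours in $C$ (one on each of the $m$ axis-lines through $p$), hence is at Hamming distance $\ge 2$ from the remaining $n^{m-1} - m$ vertices of $C$. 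Applying $\phi$ forces $\phi(p)$ to be adjacent in $\overline{H(m,n)}$ to the images of those $n^{m-1} - m$ vertices, all lying in the maximum clique $C' = \phi(C)$. The main obstacle will be closing this into a contradiction: I would combine the constraints obtained by varying $C$ over the many Latin hypercubes through $u$ (and dually, over maximum cliques through $p$), aiming to show that $\phi(p)$ is forced either to coincide with a vertex of $C'$, violating injectivity on cliques through $u$ and hence contradicting $|F|<n$, or to be adjacent to all of $C'$, giving a clique of size $n^{m-1}+1$ and contradicting the maximum clique size.

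Once $\rank(\phi) = n^{m-1}$ is proved, uniformity is immediate: the $n^{m-1}$ fibres partition the $n^m$ vertices into parts of size $\le n$, so every part has size exactly $n$, every fibre is a full line, and $\phi$ is uniform. If the direct geometric contradiction proves elusive, I would fall back on an induction on $m$ whose base case $m=2$ is handled by Roberson's theorem (since $\overline{L_2(n)}$ is strongly regular), using a Hamming-style lemma that $\phi$ sends appropriate $(m-1)$-layers into $(m-1)$-layers to reduce to $\overline{H(m-1,n)}$.
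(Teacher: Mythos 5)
Your lower bound, the identification of maximum cliques with Latin hypercubes meeting every line exactly once, and the derivation of uniformity from $\rank(\phi)=n^{m-1}$ are all fine and match the paper's setup. But the heart of the argument --- forcing $\phi(p)$ into the image clique $C'$ --- is left open (``aiming to show''), and the configuration you chose is exactly the one where the natural counting fails. From $C$ you obtain that $\phi(p)$ is adjacent to $n^{m-1}-m$ points of $C'$; however, \emph{every} vertex outside a maximum clique of $\overline{H(m,n)}$ is adjacent to exactly $n^{m-1}-m$ of its points, so this constraint is vacuous and remains vacuous as you vary $C$: an automorphism satisfies all of these constraints with $\phi(p)\notin C'$. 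Indeed, your contradiction hypothesis ``some fibre has size $<n$'' is satisfied by every automorphism, so no contradiction can follow unless a genuinely collapsed pair enters the argument. The only collapsed pair in sight lies inside $F\subseteq L$, but you placed $p$ on the same line $L$, so $p$ is non-adjacent in $\overline{H(m,n)}$ to every point of $F$ and the identification of points of $F$ buys you nothing. The paper's proof closes precisely this gap by choosing the test point differently: given a collapsed pair $a,b$ at Hamming distance $1$, it takes $x$ on a line through $a$ \emph{not} containing $b$; then $x$ is adjacent to $b$, so $\phi(x)$ is adjacent to $\phi(b)=\phi(a)$ \emph{in addition to} the images of the $n^{m-1}-m$ neighbours of $x$ in the clique through $a$, giving $n^{m-1}-m+1$ neighbours in $c$ and forcing $\phi(x)\in c$; iterating over such lines and new collapsed pairs sweeps the whole vertex set into $c$, whence $\rank(\phi)=|c|=n^{m-1}$.

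Your fallback is also not viable as sketched: an inductive reduction to $\overline{H(m-1,n)}$ would need $\phi$ to map $(m-1)$-layers into $(m-1)$-layers, but layers are not cliques (nor cocliques) of $\overline{H(m,n)}$ --- its maximal cliques are Latin hypercubes --- so there is no analogue of Lemma \ref{lemmaonhypercubes} available here, and nothing in the structure of $\overline{H(m,n)}$ forces an endomorphism to respect the layer decomposition.
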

\begin{proof}
Let $c_1$ and $c_2$ be two maximal cliques which are collapsed by $\phi$, say, $\phi(c_1)=\phi(c_2)=c$. Since $c_1\neq c_2$, there are points $a\in c_1$ and $b\in c_2$ with $\phi(a)=\phi(b)$ and $a\neq b$; thus, $ a$ and $b$ are on a $1$-dimensional layer. Let $x$ be a point on a $1$-dimensional layer (line) through $a$ which does not contain $b$. Any point not in $c_1$ is non-adjacent to exactly $m$ points of $c_1$ and adjacent to the rest of them; so $x$ is non-adjacent to $m$ points in $c_1$ including the point $a$. For $x$ is adjacent to $b$, the point $\phi(x)$ is adjacent to $\phi(a)=\phi(b)$; therefore, $\phi(x)$ is adjacent to $m-1$ points of $\phi(c_1)=c$, and thus $\phi(x)$ is in $c$.

 Since $x$ is chosen arbitrarily on the $1$-dimensional layer, all the $1$-dimensional layers through $a$ not containing $b$ are mapped to $c$. Switching the roles of $a$ and $b$ with one of the new points mapped to $c$ and iterating this argument shows that all the points are mapped to $c$.
\end{proof}

\section{The Categorial Product of Complete Graphs and \\its Complement}\label{section3}

\subsection{The Categorial Product of Complete Graphs $H(m,n,m)$}

In this section another well-known graph product is considered, namely the categorial product of complete graphs. In particular, we are concerned with the product of $m$ copies of the complete graph $K_n$:
\[K_n\times \cdots \times K_n.\]
The vertices are $m$-tuples, and two vertices adjacent if they differ in every coordinate. So, by using the notation from above this graph is $H(m,n,m)$.

Again, we show that singular endomorphisms are uniform and of rank $n^{k}$, for $1\leq k\leq m-1$. But before, we need some auxiliary lemmata.

\begin{lemma}
For $m\geq 2$ and $n\geq 3$ the following is true.
 \begin{enumerate}
  \item The clique number and the chromatic number of $H(m,n,m) $ are equal to $n$.
  \item The maximal cliques are given by
 \begin{equation*}
  \lbrace (g_1i,g_2i,...,g_{m-1}i,i): i=1,..,n \rbrace,\quad \text{for } g_1,...,g_{m-1}\in S_n.
 \end{equation*}

  \item The number of maximal cliques in $H(m,n,m) $ is $(n!)^{m-1}$.
  \item The automorphism group of $H(m,n,m) $ acts transitively on the maximal cliques.
 \end{enumerate}

\end{lemma}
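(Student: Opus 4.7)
The plan is to reduce all four items to one observation: if $C$ is a clique in $H(m,n,m)$, then the projection of $C$ onto any single coordinate is injective, because adjacent vertices differ in every coordinate. This is the only real content; everything else is bookkeeping.

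For item (1), the projection argument forces $|C|\leq n$, and the diagonal $\{(i,i,\ldots,i):i\in\Z_n\}$ is a clique of size $n$, giving $\omega(H(m,n,m))=n$. Assigning to each vertex its last coordinate yields a proper $n$-colouring (adjacent vertices differ everywhere, hence in the last coordinate), so $\chi\leq n$, and together with $\chi\geq\omega$ this gives $\chi=n$. For item (2), if $C$ is a maximal clique then $|C|=n$ by (1), so each of the $m$ projections $C\to\Z_n$ is a bijection. Reindexing the vertices of $C$ by the value of their last coordinate, the $j$-th coordinate of the point with last coordinate $i$ must equal $g_j(i)$ for a unique $g_j\in S_n$. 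Conversely, any set $\{(g_1i,\ldots,g_{m-1}i,i):i\in\Z_n\}$ has pairwise Hamming distance $m$ and size $n$, hence is a clique of maximum possible size and therefore maximal. Item (3) follows at once: the map $(g_1,\ldots,g_{m-1})\mapsto C$ is a bijection, so the number of maximal cliques is $|S_n|^{m-1}=(n!)^{m-1}$.

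For item (4), I would exhibit enough automorphisms to act transitively on the cliques described in (2). The coordinatewise action of $S_n^m$ on $\Z_n^m$ preserves the property "differs in every coordinate", so $S_n^m\leq\Aut(H(m,n,m))$. Given two maximal cliques with parameter tuples $(g_1,\ldots,g_{m-1})$ and $(h_1,\ldots,h_{m-1})$, the automorphism $(h_1g_1^{-1},\ldots,h_{m-1}g_{m-1}^{-1},\mathrm{id})\in S_n^m$ sends the first clique to the second, proving transitivity.

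The main (mild) obstacle is just making sure the clique-number bound from (1) really forces maximal cliques to be exactly the "permutation-indexed diagonals" in (2), rather than smaller maximal-by-inclusion sets; but once each coordinate projection is known to hit all of $\Z_n$, no further vertex can be adjacent to every element of $C$, so "maximal" and "size $n$" coincide. The hypothesis $n\geq 3$ plays no essential role in the argument and seems to be included only to rule out degenerate small cases.
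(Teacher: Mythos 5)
Your proof is correct and follows essentially the same route as the paper's (diagonal clique, an $n$-colouring, and the parametrization of cliques by tuples of permutations); in fact the paper's own proof is far terser and largely asserts items (2)--(4), so your projection-injectivity argument is filling in exactly the details the paper omits. The one point to tighten: you invoke ``if $C$ is a maximal clique then $|C|=n$ by (1)'', but (1) only gives $|C|\leq n$, and the patch you offer at the end (``once each projection hits all of $\Z_n$, no further vertex can be adjacent to every element of $C$'') proves the converse direction, namely that size-$n$ cliques are maximal. The direction you actually need follows from the same tool in one line: if $|C|<n$, each projection misses some value $a_j\in\Z_n$, and the vertex $(a_1,\ldots,a_m)$ is adjacent to all of $C$, so $C$ is not maximal; hence maximal cliques have size exactly $n$. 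Your remark that $n\geq 3$ is not needed for this particular lemma is also correct (for $n=2$ the graph is a perfect matching and all four statements still hold); the hypothesis matters for the later endomorphism results.
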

\begin{proof}
 Note, that the diagonal consisting of the points $(i,...,i)$, for $1\leq i\leq n$, is a clique of size $n$. Also, any layer in a layer system of $(m-1)$-layers contains a diagonal point. Thus, a map mapping an $(m-1)$-layer to the respective point is a singular endomorphism of rank $n$. Hence, it is a colouring. 

In $H(m,n,m)$ two vertices are adjacent if none of their coordinates are equal. So, take $g_1,...,g_{m-1}\in S_n$, then the set 
\begin{equation*}
\lbrace (g_1i,g_2i,...,g_{m-1}i,i): i=1,...,n\rbrace 
\end{equation*}
 forms a maximal clique. In fact, every combination of elements of $S_n$ provides a new clique and all maximal cliques are given this way. Their number is $(n!)^{m-1}$. The last result is obvious.
\end{proof}
%
%


\begin{lemma}\label{lemmacollapselayer}
 Suppose $\phi$ is a singular endomorphism of $H(m,n,m)$. Let $x_1$ and $x_2$ be two distinct points with $\phi(x_1)=\phi(x_2)$ and $l$ the minimal layer containing both points. Then, $l$ is mapped uniformly to $\phi(x_1)$.
\end{lemma}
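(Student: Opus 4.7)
The plan is to show that every vertex of $l$ is mapped by $\phi$ to $p$, where $p=\phi(x_1)$. Since $\phi$ is a graph homomorphism, adjacent vertices have distinct images, so the equality $\phi(x_1)=\phi(x_2)$ forces $x_1\not\sim x_2$; in $H(m,n,m)$ this means that $x_1,x_2$ agree in at least one coordinate. Write $I$ for the set of coordinates in which they agree, $J$ for those in which they differ, and set $k=|J|\geq 1$. The minimal layer $l$ is the set of vertices agreeing with $x_1$ on $I$; it has $n^k$ elements, and since any two elements of $l$ agree on $I\neq\emptyset$, $l$ is an independent set.

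Two preparatory observations drive the argument. First, by the preceding lemma every maximal clique of $H(m,n,m)$ has size $n$, and because $\phi$ must send an edge to an edge, $\phi$ is injective on each maximal clique; so its image is again a maximal clique of size $n$. Second, the induced subgraph on the neighbourhood $N(p)$ is isomorphic to $H(m,n-1,m)$ (the alphabet in coordinate $i$ now being $\{0,\dots,n-1\}\setminus\{p_i\}$), so its maximum cliques have size $n-1$ and each uses every value in $\{0,\dots,n-1\}\setminus\{p_i\}$ in coordinate $i$ exactly once; in particular the only vertex of $H(m,n,m)$ adjacent to every member of such a clique is $p$ itself.

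Fix $y\in l\setminus\{x_1,x_2\}$. The heart of the proof is the construction of a maximal clique $C_y=\{y,c_1,\dots,c_{n-1}\}$ containing $y$ in which every $c_s$ is adjacent to $x_1$ or to $x_2$. Such a clique is parameterised by choosing, for each coordinate $i$, a bijection $s\mapsto (c_s)_i$ from $\{1,\dots,n-1\}$ onto $\{0,\dots,n-1\}\setminus\{y_i\}$. For $i\in I$ every choice is compatible, because then $(c_s)_i\neq y_i=(x_1)_i=(x_2)_i$ automatically. For $i\in J$, fix a partition $\{1,\dots,n-1\}=S_1\sqcup S_2$ with both blocks nonempty (possible because $n\geq 3$); then in each coordinate $i\in J$, assign the value $(x_2)_i$ to an index in $S_1$ and the value $(x_1)_i$ to an index in $S_2$ (dropping an assignment whenever the corresponding value happens to equal $y_i$) and distribute the remaining values arbitrarily. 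By construction, each $c_s$ with $s\in S_1$ differs from $x_1$ in every $J$-coordinate and hence is adjacent to $x_1$, while each $c_s$ with $s\in S_2$ is adjacent to $x_2$ for the symmetric reason.

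Given such a $C_y$, every $c_s$ lies in $N(x_1)\cup N(x_2)$, so $\phi(c_s)\in N(p)$; since the $c_s$ are pairwise adjacent and $\phi$ is injective on $C_y$, their images form an $(n-1)$-clique inside $N(p)$, which is a maximum clique of $N(p)$ by the second preparatory observation. Because $y$ is adjacent to every $c_s$, the vertex $\phi(y)$ is adjacent to every $\phi(c_s)$, and then the second observation forces $\phi(y)=p$. I expect the main technical hurdle to be packaging the bijection construction of the third paragraph cleanly enough that it applies uniformly to all $k\geq 1$ and all choices of $y$; once that construction is in place, both the line case $k=1$ and the higher-dimensional cases fall out from the same argument.
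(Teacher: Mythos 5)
Your proof is correct, and it takes a genuinely different route from the paper. The paper normalises $\phi$ so that it fixes the diagonal clique pointwise and then runs an induction on the Hamming distance $d$ between $x_1$ and $x_2$, propagating the collapse outward through sublayers of smaller distance; the inductive step is somewhat sketchy ("by the same argument\dots"). You instead give a direct, induction-free argument: for each $y\in l$ you explicitly build a maximal clique through $y$ whose other $n-1$ vertices all lie in $N(x_1)\cup N(x_2)$, so their images form a maximum clique of $N(p)\cong H(m,n-1,m)$, whose unique common neighbour in $H(m,n,m)$ is $p$; hence $\phi(y)=p$. The key points all check out: $I\neq\emptyset$ since $\phi(x_1)=\phi(x_2)$ forces non-adjacency; the $I$-coordinates of any clique through $y$ automatically avoid $(x_1)_i=(x_2)_i=y_i$; the split $S_1\sqcup S_2$ (using $n\geq 3$, which is the paper's standing hypothesis for this graph) routes the value $(x_1)_i$ away from $S_1$ and $(x_2)_i$ away from $S_2$ in every $J$-coordinate, with the "dropped value" case ($y_i=(x_1)_i$ or $y_i=(x_2)_i$) handled correctly; and no $c_s$ can equal $x_1$ or $x_2$ since those agree with $y$ on $I$. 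What your approach buys is rigour and locality — it avoids both the normalisation step and the induction, and it works uniformly for all $k\geq 1$ and all $y$; what the paper's approach buys is that its intermediate computations (e.g.\ that the parallel layers $l+\lambda(1,\dots,1)$ also collapse) are reused directly in the proof of the subsequent theorem, whereas with your argument those facts would have to be rederived (though they follow immediately by applying your lemma to other collapsed pairs).
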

\begin{proof}
Since $\phi$ is transitive on the maximal cliques, we may assume that $x_1=(1,...,1,1)$ and that $\phi(k,...,k)=(k,...,k)$, for all $k=1,...,n$. We use induction on the Hamming distance $d$ between $x_1$ and $x_2$. 
 
 Suppose $d=1$, then we can assume that $x_2=(1,...,1,2)$. The points $ y_k=(k,...,k,1) $ are adjacent to $x_2$ and adjacent to $(i,...,i)$, for $i\notin \{1,k\}$; therefore, $y_k$ is mapped to $(k,...,k)$. By the same argument it follows that the points $(1,...,1,i)$ are mapped to $\phi(x_1)$, for all $1\leq i\leq n$.
 
 Now, assume $d>1$ and that the hypothesis holds for smaller distances. Again, we may assume that $x_2=(1,...,1,a_1,...,a_d)$, where none of the $a_i$ is $1$. We show that it is sufficient to set $x_2=(1,...,1,2,...,2)$. As above, the points $y_k=(\underbrace{k,...,k}_{m-d},1,...,1)$ are adjacent to $x_2$ and, thus, they are mapped to $(k,...,k)$, for all $k\neq 1$. Hence, the point $x_2'=(\underbrace{1,...,1}_{m-d},2,...,2)$ is mapped to $\phi(x_1)$. So, set $x_2=x_2'$. 
 
 Next, since the point $y_k$, is mapped to $(k,...,k)$, the points $(\underbrace{a,...,a}_{m-d},b,...,b)$ are mapped to $(a,...,a)$, for all $a,b\in \Z_n$. Similarly, the points
 \begin{equation*}
  (\underbrace{a,...,a}_{m-d},b,...,b,b,c),
  (a,...,a,b,...,b,c,b),...,(a,...,a,c,b,...,b,b)
  \end{equation*} 
  and
  \begin{equation*}
   (a,...,a,\underbrace{b_1,...,b_d}_{\text{none of them }a})
  \end{equation*}
 are mapped to $(a,...,a)$, for all $a,b,c\in\Z_n$. By the induction hypothesis, the layers of dimension $\leq d-1$ inside $l$ are mapped to $\phi(x_1)$, but then it follows that all points in $l$ are mapped to $\phi(x_1)$. Uniformity is clear.
\end{proof}

The following covers the third item in Theorem \ref{thm2}. 

\begin{theorem}
 The singular endomorphisms of $H(m,n,m)$ are uniform of ranks $n^k$, for $1\leq k\leq m-1$.
 \end{theorem}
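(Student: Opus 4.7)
The plan is to show that every kernel class of $\phi$ is a layer, and that all these layers share a common dimension $d\in\{1,\ldots,m-1\}$; the rank is then $n^{m}/n^{d}=n^{k}$ with $k=m-d$, and the partition is automatically uniform.

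The first ingredient comes straight from Lemma \ref{lemmacollapselayer}: if $\phi(x_1)=\phi(x_2)$ then the whole minimal layer through $x_1,x_2$ collapses to a single point, so each kernel class $B$ is \emph{layer-closed}, meaning that for any $x,y\in B$ the minimal layer through them lies in $B$. To promote this to "$B$ is a layer", let $d$ be the maximum Hamming distance attained by a pair in $B$; this is well-defined and $d\leq m-1$ because $B$ is an independent set in $H(m,n,m)$ (adjacent vertices, those at Hamming distance $m$, are never in a common kernel class). Let $L\subseteq B$ be the $d$-layer spanned by a maximising pair. If some $z\in B\setminus L$ existed, then $z$ would disagree with $L$ on at least one of its fixed coordinates; choosing $w\in L$ that differs from $z$ in every one of $L$'s free coordinates would yield Hamming distance greater than $d$ between $z$ and $w$, contradicting the maximality of $d$. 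Hence $B=L$ is a $d$-layer.

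The main obstacle is the third step: showing that all kernel classes share a common dimension. My plan is to use the transitivity of $\Aut(H(m,n,m))$ on maximum cliques (from the auxiliary lemma preceding Lemma \ref{lemmacollapselayer}) to place $\phi$ in a normal form, say fixing the diagonal pointwise, and then to show that any two kernel classes $B_1,B_2$ with adjacent images $p_1,p_2$ in $\phi(V)$ must have the same size. The leverage is the exact value $(n-2)^m$ of common neighbours of an adjacent pair in $H(m,n,m)$: a double count of common neighbours, reading them off once inside $V$ weighted by kernel-class sizes and once inside $\phi(V)$ counted unweighted, together with the fact that every "intermediate" kernel class is itself a layer, should force $|B_1|=|B_2|$. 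An alternative route is to prove directly that the parallel translates of a given $d$-layer kernel class $B$ are also kernel classes, by applying Lemma \ref{lemmacollapselayer} to carefully chosen adjacent pairs lying in parallel $d$-layers.

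Finally, since $\phi$ is singular, some class has at least two points, so $d\geq 1$; and no class can span all of $V$ because two vertices at Hamming distance $m$ are adjacent. Hence $1\leq k=m-d\leq m-1$, the rank is $n^{k}$, and uniformity is immediate from the common value of $d$.
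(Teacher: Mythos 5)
Your first two steps are sound and actually sharpen what the paper leaves implicit: Lemma \ref{lemmacollapselayer} gives layer-closedness of each kernel class, and your maximal-distance argument correctly upgrades this to ``each kernel class is exactly a $d$-layer for some $0\leq d\leq m-1$''. The problem is that everything then hinges on your third step --- that all classes share the same $d$ --- and there you offer only a plan, not a proof. This is precisely where the paper's proof does its real work. The double-counting route is not obviously sound: the count $(n-2)^m$ of common neighbours of an adjacent pair lives in the whole vertex set, whereas your $p_1,p_2$ live in the image $\phi(V)$, which is not all of $V$ and whose induced structure you have not controlled; moreover an edge-count between $B_1$ and $B_2$ does not by itself pin down $|B_1|$ and $|B_2|$ separately. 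You would need to make this precise before it counts as an argument, and as stated I do not see how it closes.

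Your ``alternative route'' is the one the paper actually takes, but the phrase ``carefully chosen adjacent pairs'' hides the key construction. The paper normalises so that one $d$-layer $l$ collapses to $(1,\ldots,1)$ and its diagonal translates collapse to the diagonal points, then takes an arbitrary parallel $d$-layer $\tilde{l}$ and builds two maximal cliques $c_1=\{y_1,\ldots,y_n\}$ and $c_2=\{z_1,\ldots,z_n\}$ that agree in $n-1$ positions of their images ($z_1=y_1$, and $z_i$ chosen in the same collapsed translate as $y_i$ for $i\geq 3$), with $z_2=y_2+(0,\ldots,0,1,\ldots,1)$. Since a maximal clique of $H(m,n,m)$ maps onto a maximal clique and the $n$-th point of a maximal clique is uniquely determined by the other $n-1$, this forces $\phi(z_2)=\phi(y_2)$ with $y_2,z_2$ at distance $d$ in $\tilde{l}$, and Lemma \ref{lemmacollapselayer} then collapses $\tilde{l}$. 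Supplying this clique-swap construction (or a genuinely complete version of the double count) is what your proposal is missing; without it the uniformity claim, and hence the rank formula $n^{m-d}$, is not established.
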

\begin{proof}
 Let $\phi$ be a singular endomorphism with $\phi(x_1)=\phi(x_2)$, for some distinct points $x_1$ and $x_2$. Thus, pick $x_1$ and $x_2$ with the maximal distance $d$ among all the points collapsed by $\phi$. Without loss of generality, $x_1=(1,...,1)$, $x_2=(1,...,1 \underbrace{2,...,2}_{d})$ and $\phi(i,...,i)=(i,...,i)$, for all $1\leq i\leq n$.
 
 By Lemma \ref{lemmacollapselayer}, the layer $l=\{(1,...,1,a_1,...,a_d):\ a_i \in \Z_n \}$ is mapped to $x_1$. Also, by the arguments used in Lemma \ref{lemmacollapselayer} the layers $l+\lambda(1,...,1)$ are mapped to the point $(\lambda,...,\lambda)$, for $1\leq \lambda\leq n$. So, pick another layer $\tilde{l}=\{(x_1,...,x_{m-d},a_1,...,a_d):\ a_1,...,a_d \in \Z_n \}$ for some $x_1,...,x_{m-d}\in \Z_n$. We show that there is a point $x$ with $\phi(\tilde{l})=x$. In other words, show that 
 \begin{equation*}
  \phi(x_1,...,x_{m-d},1,...,1)=\phi(x_1,...,x_{m-d},2,...,2).
 \end{equation*}
 
 Pick two maximal cliques $c_1$ and $c_2$ as follows. Let $c_1= \{y_1,y_2,y_3,...,y_n \}$ be a maximal clique, where $y_i=(j,...,j)$, for $j\in \Z_n$ and $i\geq 3$, and $y_1$ is an arbitrary point not mapped to $(j,...,j)$, for any $j$. It follows, that this determines the point $y_2$. On the other hand, let $c_2= \{z_1,z_2,z_3,...,z_n \}$ be a maximal clique with $z_1=y_1$ and $z_2=y_2+(\underbrace{0,...,0}_{m-d},1,...,1)$. Given this, we are able to choose the missing $z_i$ such that $z_i$ is mapped to $(j,...,j)$, where $j$ is determined by $y_i$, for $i\geq 3$. By construction, $c_1$ and $c_2$ are maximal cliques, and since $\phi(z_i)=\phi(y_i)$, for $i=1,3,4,...,n$, it holds $\phi(z_2)=\phi(y_2)$. However, the distance between $y_2$ and $z_2$ is $d$ and, thus, by Lemma \ref{lemmacollapselayer} the layer containing both points is mapped to a single point. Using different sets $c_1$ and $c_2$, we can show that all choices of $\tilde{l}$ are mapped to points.
\end{proof}

\subsection{The Complement Graph $\overline{H(m,n,m)}$}

As for $\overline{H(m,n)}$, we show that $\overline{H(m,n,m)}$ is a pseudo-core. Recall, two vertices in $\overline{H(m,n,m)}$ are adjacent, if their Hamming distance is in $\{1,...,m-1\}$. The following facts are obvious.

\begin{lemma}
 For $m\geq 2$ and $n\geq 3$ it holds:
\begin{enumerate}
  \item The maximal cliques are given by the $(m-1)$-dimensional layers.
  \item The number of maximal cliques in $\overline{H(m,n,m)} $ is $h_{m-1}(m,n)=mn$.
  \item The automorphism group of $\overline{H(m,n,m)} $ acts transitively on the maximal cliques. 
\end{enumerate}
\end{lemma}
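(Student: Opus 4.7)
The plan is to prove the three items in turn. I read ``maximal'' in item~(1) as ``maximum'' (a clique of the greatest size), since strictly smaller maximal cliques that are \emph{not} $(m-1)$-layers already exist in $\overline{H(3,3,3)}$ (for instance, one can check that $\{(1,1,1),(1,2,2),(2,1,2),(3,1,2),(1,1,2),(1,3,2),(1,1,3)\}$ is a maximal clique of size $7$); this reading is the one consistent with the count $mn$ asserted in item~(2) and with the role played by the lemma in the subsequent pseudo-core argument.

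An $(m-1)$-layer $L=\{x\in\mathbb{Z}_n^m : x_i=v\}$ contains $n^{m-1}$ points, any two of which share the $i$-th coordinate and so lie at Hamming distance in $\{1,\dots,m-1\}$; hence $L$ is a clique in $\overline{H(m,n,m)}$ of size $n^{m-1}$. For the matching upper bound I would combine the inequality $\alpha(G)\omega(G)\le|V(G)|$, valid for any vertex-transitive $G$, with the preceding lemma giving $\omega(H(m,n,m))=n$; this yields $\omega(\overline{H(m,n,m)})=\alpha(H(m,n,m))\le n^{m-1}$, so every $(m-1)$-layer is a maximum clique.

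For the converse---that every maximum clique is an $(m-1)$-layer---I would apply the Delsarte--Hoffman ratio bound to $H(m,n,m)\cong K_n^{\times m}$. Its eigenvalues are the products $\prod_{i=1}^m\lambda_i$ with each $\lambda_i\in\{n-1,-1\}$, so $k=(n-1)^m$ and the smallest eigenvalue is $\tau=-(n-1)^{m-1}$; the ratio bound $\alpha\le -|V|\tau/(k-\tau)=n^{m-1}$ is tight, which forces the indicator $\chi_C$ of any maximum independent set $C$ to lie in $\mathbb{R}\mathbf{1}\oplus E_\tau$. Since $E_\tau=\bigoplus_{i=1}^{m}\mathbf{1}^{\otimes(i-1)}\otimes U\otimes \mathbf{1}^{\otimes(m-i)}$ with $U=\mathbf{1}^\perp\subseteq\mathbb{R}^n$, one obtains $\chi_C(x)=\tfrac{1}{n}+\sum_{i=1}^m h_i(x_i)$ with each $h_i$ summing to zero. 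Imposing $\chi_C\in\{0,1\}$ and varying one coordinate at a time while fixing the others forces all but one $h_i$ to vanish and the remaining $h_j$ to take the value $1-\tfrac{1}{n}$ at exactly one $v\in\mathbb{Z}_n$ and $-\tfrac{1}{n}$ elsewhere, whence $C=\{x:x_j=v\}$ is an $(m-1)$-layer. This equality analysis is the main obstacle.

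Items~(2) and~(3) then follow quickly. Each $(m-1)$-layer is uniquely determined by the pair $(i,v)$ consisting of its fixed coordinate and value, so the count is $mn=\binom{m}{m-1}n=h_{m-1}(m,n)$. The wreath product $S_n\wr S_m$ lies in $\Aut(\overline{H(m,n,m)})$ because permuting the coordinates and permuting symbols within each coordinate both preserve Hamming distance; this group acts transitively on the pairs $(i,v)$ and hence transitively on the $(m-1)$-layers.
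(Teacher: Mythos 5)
The paper offers no proof of this lemma at all---it is introduced with the sentence ``The following facts are obvious''---so there is no argument of the paper to compare yours against; your proposal fills a genuine gap, and it does so correctly. Your preliminary correction of the statement is also warranted: as literally written, item (1) is false, since cliques of $\overline{H(m,n,m)}$ are families of tuples pairwise agreeing in at least one coordinate, and such ``intersecting'' families admit maximal members that are not stars. Your size-$7$ example in $\overline{H(3,3,3)}$ checks out: it is a clique, and a short case analysis on the first coordinate shows no eighth vertex can be adjoined, while the $2$-layers have size $9$. Reading ``maximal'' as ``maximum'' is exactly what items (2) and (3) require, and the distinction is not pedantic: the paper's subsequent pseudo-core theorem quotes this lemma as ``we know that the maximal cliques are layers,'' so its own later proof silently relies on the corrected reading.

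Your proof of the corrected statement is sound. The layers are cliques of size $n^{m-1}$; the clique--coclique bound $\alpha\omega\le|V|$ for the vertex-transitive graph $H(m,n,m)$, with $\omega=n$ from the paper's preceding lemma, gives the matching upper bound; and the converse is precisely the Greenwell--Lov\'asz theorem that for $n\ge 3$ the maximum independent sets of $K_n^{\times m}$ are the dictatorships $\{x: x_j=v\}$, which you re-derive via the Hoffman/Delsarte ratio bound. The spectral data are right: $k=(n-1)^m$, least eigenvalue $\tau=-(n-1)^{m-1}$ (here $n\ge 3$ is what makes $j=1$ the unique odd number of $(-1)$-factors achieving the extreme product), the bound is tight at $n^{m-1}$, and equality forces $\chi_C=\tfrac1n\mathbf{1}+\sum_i h_i$ with each $h_i$ a function of coordinate $i$ alone summing to zero. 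Your final step is only sketched, but the sketch is completable along the lines you indicate: fixing all other coordinates shows each $h_i$ takes at most two values differing by $1$, so each $h_i$ is either identically zero or takes the values $-s_i/n$ and $1-s_i/n$; comparing a point that takes the larger value in every non-constant coordinate with a point taking the smaller value in every non-constant coordinate shows the number of non-constant $h_i$'s is at most one, and the $\{0,1\}$-constraint then forces $s_j=1$, i.e.\ $C$ is an $(m-1)$-layer. Items (2) and (3) follow as you say. Alternatively you could cite Greenwell--Lov\'asz directly for the converse; either way, the lemma is considerably less ``obvious'' than the paper suggests, and your write-up both repairs the statement and supplies the missing argument.
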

%
%


By a straightforward combinatorial observation we obtain the last part of Theorem \ref{thm2}. 

\begin{theorem}
 The graph $\overline{H(m,n,m)} $ is a pseudo-core whose endomorphisms are uniform.
\end{theorem}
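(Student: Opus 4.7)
The strategy is a two-sided squeeze on the image size $|\phi(V)|$, pinning it to $n^{m-1}$; uniformity then follows because every fibre is an independent set of size at most $\alpha(\overline{H(m,n,m)})=n$ and the total number of vertices is $n^m$, so equal fibre size forces each fibre to have exactly $n$ elements.

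First, I would compute the basic parameters. The maximum cliques being the $(m-1)$-layers (by the preceding lemma) gives $\omega=n^{m-1}$. Independent sets consist of vertices pairwise at Hamming distance $m$, so projection to any single coordinate is injective on them and $\alpha=n$, with equality attained by any diagonal $\{(k,\ldots,k):k\in\Z_n\}$. The cosets of the diagonal subgroup $\langle(1,\ldots,1)\rangle$ in $\Z_n^m$ form a proper colouring of size $n^{m-1}=|V|/\alpha$, so $\chi=n^{m-1}$. Since the fibres of any endomorphism are independent sets partitioning $V$, we obtain $|\phi(V)|\ge n^{m-1}$, with equality iff every fibre has size exactly $n$.

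Second, I would observe that $\phi$ is injective on every maximum clique (edges are preserved), so $\phi$ sends each $(m-1)$-layer bijectively onto an $(m-1)$-layer. Assuming $\phi$ is singular, pick distinct $a,b$ with $\phi(a)=\phi(b)=p$; loopless non-adjacency forces Hamming distance $m$, so $a$ and $b$ differ in every coordinate. The $2m$ cliques $L_{i,a_i}$ and $L_{i,b_i}$ for $i=1,\ldots,m$ are pairwise distinct maximum cliques, whereas only $m$ maximum cliques contain $p$. By pigeonhole, two of them share an image clique, and since $\phi$ is bijective on each, this already yields a second, far more massive, collapse of vertices.

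Third, I would propagate this initial collision. Applying the same pigeonhole-and-injectivity argument to the newly produced collapsed vertex pairs, and normalising at each step via the transitive automorphism action on maximum cliques noted in the preceding lemma, one shows that every $(m-1)$-layer of $\Z_n^m$ maps to a common target $(m-1)$-layer $M$. Hence $\phi(V)\subseteq M$ and $|\phi(V)|\le n^{m-1}$. Combined with the first step, this forces $|\phi(V)|=n^{m-1}$ with uniform fibres of size $n$. The main obstacle is this final propagation: the initial $2m$ cliques do not cover vertices at Hamming distance $m$ from both $a$ and $b$ — such as $(k,\ldots,k)$ for $k\notin\{a_1,b_1\}$, which exist since $n\ge 3$ — so one must generate fresh collapsed pairs and iterate, with the automorphism-group transitivity keeping each round combinatorially identical to the first.
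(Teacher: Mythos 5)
Your first two steps are sound and coincide with the paper's setup: the fibres of $\phi$ are independent sets, $\alpha=n$ forces $|\phi(V)|\geq n^{m-1}$, and the pigeonhole on the $2m$ maximal cliques through a collapsed pair $a,b$ (which must lie at Hamming distance $m$) produces two distinct maximal cliques $c_1\neq c_2$ with $\phi(c_1)=\phi(c_2)=c$. The problem is your third step. The assertion that ``every $(m-1)$-layer maps to a common target $M$'' is essentially the theorem itself, and you do not prove it: you name the propagation as the main obstacle and then wave at an iteration of the pigeonhole argument whose coverage of all of $V$ is never established. The appeal to ``normalising at each step via the transitive automorphism action'' does not rescue this, because after the first normalisation the later collisions are no longer free to be renormalised independently; each round is constrained by the positions fixed in the previous one. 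As written, the argument has a genuine gap exactly where the work is.

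The paper closes this gap in a single non-iterative step, and the idea is worth absorbing. Given $c_1,c_2$ with $\phi(c_1)=\phi(c_2)=c$ and an arbitrary $x\notin c_1\cup c_2$, choose a maximal clique $l$ through $x$ that meets both $c_1$ and $c_2$ in $(m-2)$-sublayers. Then $l\cap(c_1\cup c_2)$ contains more than $n^{m-2}$ points (this is $2n^{m-2}$ in the parallel case and $2n^{m-2}-n^{m-3}$ in the non-parallel case), all pairwise adjacent since they lie in the clique $l$, so $\phi$ is injective on them and their images are more than $n^{m-2}$ distinct points of $c$. They therefore cannot all lie in one $(m-2)$-sublayer of $c$, so one can select $m$ of the images whose only common neighbours are the points of $c$. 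Since $x$ is adjacent to all of their preimages inside $l$, the vertex $\phi(x)$ is adjacent to all $m$ selected images and hence lies in $c$. This handles every $x$ at once, giving $\phi(V)\subseteq c$ and hence $|\phi(V)|=n^{m-1}$ with fibres of size exactly $n$, which is the squeeze you were aiming for. If you want to keep your iterative scheme instead, you must actually exhibit the fresh collapsed pairs at each stage and prove that the cliques they generate exhaust $\Z_n^m$; until then the proof is incomplete.
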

\begin{proof}
 Let $\phi$ be a singular endomorphism and assume $\phi$ maps the two maximal cliques $c_1$ and $c_2$ to $c$. Since the automorphism group is transitive on the maximal cliques, we may assume that $c=c_1$.

 We know that the maximal cliques are layers, so suppose $c_1$ and $c_2$ are parallel layers (with respect to the same coordinate). Pick a point $fx$ not in $c_1\cup c_2$ and let $l$ be an $(m-1)$-dimensional layer through $x$ not parallel to $c_1$. Then, 
\begin{equation*}
 |c_1\cap l|+|c_2\cap l|=n^{m-2}+n^{m-2}.
\end{equation*}
All these $2n^{m-2}$ points are pairwise adjacent. Also, they cannot be mapped to a single $(m-2)$-dimensional sublayer $\tilde{l}$ of $c$, since there are too few points in $\tilde{l}$. Hence, pick $m$ of the points which are in no $(m-2)$-dimensional layer. The image $\phi(x)$ has to be adjacent to all of the points, but the only points which are adjacent to all of the $m$ points are the points in $c$. Therefore, $\phi(x)$ is mapped to $c$.

On the other hand, suppose $c_1$ and $c_2$ are not parallel. Again, pick $x$ not in either of the cliques. Then, there is an $(m-1)$-dimensional layer $l$ intersecting with both $c_1$ and $c_2$, and it holds
\begin{equation*}
 |c_1\cap l|+|c_2\cap l|-|c_1\cap c_2\cap l|=n^{m-2}+n^{m-2}-n^{m-3}>n^{m-2}.
\end{equation*}
Again, these common points are pairwise adjacent and thus cannot be mapped to an $(m-2)$-dimensional layer. As in the last case, we can pick $m$ points in the image which are adjacent to $\phi(x)$ and which have $c$ as the only points adjacent to all of the $m$ points.

\end{proof}

\section{The Number of Endomorphisms and Latin hypercubes}\label{section4}

After determining the uniformity of the singular endomorphism in the previous sections, we are going to count the singular endomorphisms and prove Theorem \ref{thm3}. In detail, we derive formulae for the number of (singular) endomorphisms of the graphs $H(m,n), \overline{H(m,n)}$ and $\overline{H(m,n,m)}$. Unfortunately, further research is necessary to find the number of endomorphisms of $H(m,n,m)$. 

It is straightforward to construct singular endomorphisms for $\overline{H(m,n)}$ and $\overline{H(m,n,m)}$; so because their singular endomorphisms are colourings (Theorem \ref{thm2}), it is easy to find formulae. However, the Hamming graph $H(m,n)$ has endomorphisms of various ranks, namely, ranks $n^{k}$, for every $1\leq k\leq m-1$; so the formula will look somewhat more complicated. Recall, that the singular endomorphisms are Latin hypercubes, by Lemma \ref{lemmapreimageisalatinhypercube}; thus, the formula depends on the number of Latin hypercubes. (An very nice publication containing the number of Latin hypercubes is given by \cite{mckay08}.)



\begin{theorem}\label{theoremcountingendomsofhamminggraph}
 The number of singular endomorphisms of $H(m,n)$ of rank $n^k$, for $1\leq k\leq m$, is given by the formula

\begin{equation*}
\binom{m}{k}\cdot n^{m-k}\cdot k!\cdot \left( \sum\limits_{\substack{P \text{ partition of } \{1,...,m\} \\ \text{with } k \text{ parts}}} \prod\limits_{X \in P} \#\LHC(|X|,n) \right),
\end{equation*}
where the product runs over all parts in $P$; $|X|$ is the size of the part $X\in P$ and $\#\LHC(d,n)$ is the number of Latin hypercubes of dimension $d$ of order $n$ (and class $1$).
\end{theorem}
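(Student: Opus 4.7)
The plan is to break the count into (choice of image $k$-layer) $\times$ (endomorphisms onto a fixed $k$-layer), and then analyse the latter via a structural decomposition into Latin hypercubes along a partition of source coordinates.

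First I would invoke Theorem \ref{theoremendomorphismsofH(m,n)} to assert that every singular endomorphism $\phi$ of rank $n^{k}$ has image a $k$-layer; since there are $\binom{m}{k} n^{m-k}$ such layers, this accounts for the outer factor, and it suffices to show that for a fixed $k$-layer $L$ the number of endomorphisms with image exactly $L$ equals $k! \cdot \sum_{P} \prod_{X \in P} \#\LHC(|X|,n)$. Identifying $L$ with $H(k,n)$, write $\phi(x) = (f_1(x), \ldots, f_k(x))$ with $f_j : \mathbb{Z}_n^m \to \mathbb{Z}_n$. For each source coordinate $i$ and each base point $x_0$, the $1$-layer through $x_0$ in direction $i$ is a clique that must embed as a $1$-layer in $L$, in some target direction $j(i,x_0) \in \{1,\ldots,k\}$; equivalently, exactly one of the $f_j$'s varies along this $1$-layer while the others are constant.

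The main technical step, and the step I expect to be the main obstacle, is showing that $j(i,x_0)$ is independent of $x_0$, so that it defines a function $\sigma : \{1,\ldots,m\} \to \{1,\ldots,k\}$. I would prove this by taking two parallel $1$-layers $\ell$ and $\ell'$ through adjacent base points $x_0$ and $x_0 + e_{i'}$ (with $i' \neq i$) and applying Lemma \ref{lemmaonhypercubes} to the $2$-layer $\ell \cup \ell'$: if $\ell$ and $\ell'$ went to $1$-layers in distinct target directions $j \neq j'$, then the $n$ edges between $\ell$ and $\ell'$ would each need to change exactly one target coordinate, forcing some $f_j$ or $f_{j'}$ to be simultaneously constant along one sub-layer and non-constant across the joining edges — a contradiction. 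Chaining this across the graph (using connectedness in the $i'$-direction and looping over $i'$) yields the well-definedness of $\sigma$. The map $\sigma$ is surjective, since any unused $j$ would force $f_j$ to be constant, shrinking the image below $L$.

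Granted $\sigma$, set $X_j = \sigma^{-1}(j)$; by construction $f_j$ depends only on the variables indexed by $X_j$, and fixing all but one coordinate of $X_j$ and varying that coordinate shows that the restricted $f_j : \mathbb{Z}_n^{X_j} \to \mathbb{Z}_n$ sends each axis-parallel line bijectively onto $\mathbb{Z}_n$, i.e., $f_j$ is a Latin hypercube in $\LHC(|X_j|,n)$. Conversely, any surjection $\sigma$ together with an arbitrary choice of $f_j \in \LHC(|X_j|,n)$ for each $j$ assembles into an endomorphism with image $L$, because each edge of $H(m,n)$ lies in direction $i$ for some $i \in X_{\sigma(i)}$ and so exactly $f_{\sigma(i)}$ changes across the edge. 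It remains to count: each unordered set partition $P$ of $\{1,\ldots,m\}$ into $k$ nonempty blocks corresponds to $k!$ surjections $\sigma$ (one per labelling of the blocks by target coordinates), each contributing $\prod_{X \in P} \#\LHC(|X|,n)$ endomorphisms onto $L$. Summing over $P$ and multiplying by $\binom{m}{k} n^{m-k}$ gives the stated formula.
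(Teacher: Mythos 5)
Your proposal is correct and follows essentially the same route as the paper's proof: choose the image $k$-layer ($\binom{m}{k}n^{m-k}$ ways), show the source coordinates split into $k$ blocks each governing one target coordinate via a Latin hypercube, and multiply by $k!$ for the block-to-coordinate matching. Your explicit well-definedness argument for the assignment $\sigma$ (via parallel $1$-layers and Lemma \ref{lemmaonhypercubes}) and your verification of the converse direction are carried out more carefully than in the paper, which treats these points briefly, but the underlying decomposition and counting are identical.
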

\begin{proof}
Let $\phi$ be a singular endomorphism. Since the image of $\phi$ is a $k$-dimensional layer (see Theorem \ref{theoremendomorphismsofH(m,n)}), we have $h_{k}(m,n)=\binom{m}{k}n^{m-k}$ choices to choose such a layer. We choose $k$ of the $m$ coordinates, say, $x_1,...,x_k$ which will determine the points of the image. Now, $\phi$ can be obviously described by a function onto the chosen $k$ coordinates:
\begin{equation*}
 \phi:\ (x_1,...,x_m) \mapsto (\phi_1(x_1,...,x_m),...,\phi_k(x_1,...,x_m),a_{k+1},...,a_m),
\end{equation*}
for some $a_{k+1},...,a_m\in \Z_n$. We show that each $\phi_i$ corresponds to a Latin hypercube.  

Let $x$ be a point in the image of $\phi$ and $e_1,...,e_m$ the standard basis of $\Z_n^m$. Consider the line $l:=x+\langle e_i \rangle$, for some $1\leq i \leq k$. The pre-image $\phi^{-1}(l)$ is determined by $\phi_i$; in addition, it is a Latin hypercube (by Lemma \ref{lemmapreimageisalatinhypercube}). Therefore, each of the functions $\phi_i$ are determined by Latin hypercubes. 

Next, suppose $\phi_1$ is given by a Latin hypercube of dimension $d$. It follows, that $\phi_1(x_1,...,x_m)=\phi_1(x_{i_1},...,x_{i_d})\overset{\text{Wlog}}{=}\phi_1(x_1,...,x_d)$, for $i_j\in \{1,...,m\}$. Assume there is another function, say, $\phi_2$ which depends on at least one of the coordinates $x_1,...,x_d$, say, $x_1$. In other words, assume that $\phi_1$ and $\phi_2$ depend on a common coordinate. Then, the line $x+\langle e_1 \rangle$ would be mapped to two distinct lines by $\phi_1$ and $\phi_2$, respectively. This is a contradiction, as a map cannot do that. Therefore, the $m$ coordinates are partitioned into $k$ parts. At last, each part has to be matched to a function $\phi_i$, for $i=1,...,k$; this provides $k!$ choices.
\end{proof}

We provide a small example to show how to use this formula.

\begin{example}
 Count the singular endomorphisms of $H(4,3)$. At first we need to partition the set $\{1,2,3,4\} $ into $1,2$ and $3$ parts with respect to the different values for $k$. (Note, the number of partitions of $\{1,\ldots,n\}$ into $k$ parts is the Stirling number of second kind, $S(n,k)$).
 \begin{center}
 \begin{tabular}{|c|c|}\hline
 $k$& Partitions \\\hline\hline
  $k=1$& $\{\{1,2,3,4\}\}$\\\hline
  $k=2$& $\{\{1\},\{2,3,4\}\},\{\{2\},\{1,3,4\}\},\{\{3\},\{1,2,4\}\},\{\{4\},\{1,2,3\}\}$\\
  &	 $\{\{1,2\},\{3,4\}\},\{\{1,3\},\{2,4\}\},\{\{1,4\},\{2,3\}\}$\\\hline
   $k=3$& $ \{\{1\},\{2\},\{3,4\}\},\{\{1\},\{2,3\},\{4\}\},\{\{1\},\{3\},\{2,4\}\}$\\
   &	$ \{\{1,2\},\{3\},\{4\}\},\{\{1,3\},\{2\},\{4\}\},\{\{1,4\},\{2\},\{3\}\}$\\\hline
 \end{tabular}
\end{center}
The number of Latin hypercubes is given in the next table.
\begin{center}
 \begin{tabular}{|c|c|}\hline
 $d$& $\#\LHC(d,3)$ \\\hline\hline
 $1$& $3!$\\\hline
 $2$& $3!\cdot 2$\\\hline
 $3$& $3!\cdot 2^2$\\\hline
 $4$& $3!\cdot 2^3$\\\hline 
 \end{tabular}
\end{center}
Eventually, we obtain for the different $k$:
\begin{align*}
 k=1: \#&= \binom{4}{1} \cdot 3^3 \cdot 1!\cdot \#\LHC(4,3) \\
	&=5184, \notag \\
 k=2: \# &=\binom{4}{2}\cdot 3^2 \cdot 2!  \cdot  \left(4 \cdot \#\LHC(1,3) \cdot \#\LHC(3,3) + 3 \cdot \#\LHC(2,3)^2 \right) \\
 &=108864, \notag \\
 k=3: \# &=\binom{4}{3} \cdot 3^1 \cdot 3! \cdot 6\cdot \#\LHC(1,3)^2 \cdot \#\LHC(2,3) \\
 &=186624. \notag
\end{align*}
Consequently, $H(4,3)$ admits $5184+108864+186624=300672$ singular endomorphisms.
\end{example}

\begin{corollary}
 The singular endomorphisms of $H(m,n)$ correspond to Latin hypercubes of class $1$ and dimension less than $m$. 
\end{corollary}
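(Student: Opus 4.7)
The plan is to extract the correspondence directly from the structural work already carried out in the proof of Theorem \ref{theoremcountingendomsofhamminggraph}, together with Lemma \ref{lemmapreimageisalatinhypercube}. Given a singular endomorphism $\phi$ of $H(m,n)$, Theorem \ref{theoremendomorphismsofH(m,n)} tells us that its image is some $k$-layer with $1\leq k\leq m-1$. After choosing which $k$ of the $m$ coordinates are "active" on that layer, $\phi$ factors through the projection onto those coordinates, so we may write
\[
\phi\colon (x_1,\dots,x_m)\mapsto (\phi_1(x_1,\dots,x_m),\dots,\phi_k(x_1,\dots,x_m),a_{k+1},\dots,a_m)
\]
for constants $a_{k+1},\dots,a_m\in\Z_n$. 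The first step is to identify each $\phi_i$ with a Latin hypercube, and the second is to show that these hypercubes are glued along a partition of the coordinate set.

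For the first step, I would fix a point $x$ in the image and the line $l=x+\langle e_i\rangle$ for $1\leq i\leq k$. By Lemma \ref{lemmapreimageisalatinhypercube} the preimage $\phi^{-1}(l)$ is a Latin hypercube of dimension equal to the number of coordinates on which $\phi_i$ genuinely depends. Class $1$ is automatic because $l$ is a $1$-dimensional line in the Hamming graph. This associates to each $\phi_i$ a Latin hypercube of some dimension $d_i<m$.

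For the second step, I would run the disjointness argument from the proof of Theorem \ref{theoremcountingendomsofhamminggraph}: if two distinct components $\phi_i,\phi_j$ both depended on a coordinate $x_s$, then moving along $x+\langle e_s\rangle$ would force this line to map into two different lines of the image simultaneously, which no map can do. Hence the $m$ coordinates partition into $k$ blocks $X_1,\dots,X_k$, the component $\phi_i$ depending only on the variables indexed by $X_i$, and the Latin hypercube attached to $\phi_i$ has dimension $|X_i|<m$. Conversely, given such data — a choice of $k$-layer, a partition of $\{1,\dots,m\}$ into $k$ parts, and a Latin hypercube of class $1$ on each part — reassembling the $\phi_i$ as above produces a map whose image is the chosen $k$-layer and which, on every edge (a single-coordinate change), either fixes the output coordinates or moves them along a Hamming-distance-$1$ line by the Latin hypercube property. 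This shows it is an endomorphism.

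The main obstacle I foresee is purely bookkeeping: making sure the "correspondence" claim is stated so that both directions match, i.e.\ that the data assigned to $\phi$ (choice of $k$-layer, partition, and a Latin hypercube per block) is complete and faithful, and that the converse assembly really does land in $\End(H(m,n))$. The adjacency-preservation check in the reverse direction is where one must be a little careful, because an edge of $H(m,n)$ changes exactly one coordinate and hence touches only one block $X_i$, so only $\phi_i$ varies; the Latin hypercube property on $X_i$ then guarantees that the image coordinates either coincide (kernel edge) or differ in exactly one entry, which is precisely an edge or a loop in $H(m,n)$. Nothing here goes beyond what has already been established, so the corollary is a clean repackaging of the preceding results.
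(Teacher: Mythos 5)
Your argument is correct and follows essentially the same route as the paper, which states this corollary without proof as an immediate consequence of the decomposition in the proof of Theorem \ref{theoremcountingendomsofhamminggraph} (components $\phi_i$ given by Latin hypercubes via Lemma \ref{lemmapreimageisalatinhypercube}, coordinates partitioned into blocks). One small caution in your converse check: since $H(m,n)$ is loopless, an endomorphism may not collapse an edge, so the case you label ``kernel edge'' must be excluded rather than tolerated --- fortunately the class-$1$ Latin hypercube property (each symbol occurs exactly once per line) rules it out.
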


 Next, we turn to the graphs $\overline{H(m,n)}$ and $\overline{H(m,n,m)}$. In order to provide the number of singular endomorphisms, we need to define two combinatorial numbers. First, by $P_1(m,n)$ we denote the number of partitions of the hypercube $\mathbb{Z}_n^m$ into $1$-dimensional layers. (Alternatively, this number is the number of tilings of the $m$-dimensional cube with side $n$ with $n\times 1\times \cdots \times 1$ tiles.) We call it the \textit{'Jenga'-number}, due to the famous wooden building block game for children. This description is also equivalent to the partition of $\mathbb{Z}_n^m$ into non-intersecting maximal cliques of $H(m,n)$. In this regard, the number $P_2(m,n)$ denotes the number of partitions of $\mathbb{Z}_n^m$ into maximal cliques of $H(m,n,m)$. (See Table \ref{table1} and \ref{table2} for small values of $P_1(m,n)$ and $P_2(m,n)$).

\begin{example}
 Consider the points of $\Z_3^3$. We need $9$ of the $1$-dimensional layers and we can arrange them in $21$ different ways; therefore, the Jenga-number is $21$. On the other hand, $P_2$ is $40$, in this case.
\end{example}

\begin{table}[!t]
 \begin{center}
\begin{tabular}{|r||r|r|r|r|r|}\hline
 & $n=$2 & 3 & 4 & 5 & 6\\\hline\hline
$m=$2 & 2 & 2 & 2 & 2 & 2\\\hline
3 & 9 & 21 & 45 & 93 & 189\\\hline
4 & 272 & 49,312 & 25,485,872 &  &  \\ \hline
 \end{tabular}
 \end{center}
 \caption{$P_1(m,n)$ for small values}\label{table1}
\end{table}

\begin{table}[!t]
 \begin{center}
\begin{tabular}{|r||r|r|r|r|r|}\hline
 & $n=$2 & 3 & 4 & 5 & 6\\\hline\hline
$m=$2 & 3 & 2 & 24 & 1,344 & 1,128,960\\\hline
3 & 15 & 40& 10,123,306,543 & & \\\hline
4 & 255 & &&  &  \\ \hline
5 & 65,535 & &&  &  \\ \hline
 \end{tabular}
 \end{center}
 \caption{$P_2(m,n)$ for small values}\label{table2}
\end{table}
%
%

\begin{remark}
 For the values $P_1(2,n)$ and $P_1(3,n)$ one can easily deduce formulas. It holds
\begin{equation*}
 P_1(2,n)=2 \quad \text{ and } \quad P_1(3,n)=3(2^n-1).
\end{equation*}
Note, the second sequence also describes the number of moves to solve Hard Pagoda puzzle; further comments can be found in OEIS \cite{oeis}. Other sequences derived from these numbers are, yet, unknown to the author.
\end{remark}

\begin{proposition}
 The number of singular endomorphisms of $\overline{H(m,n)}$ is given by
 \begin{equation*}
  P_1(m,n)\cdot \#\LHC(m-1,n) \cdot (n^{m-1})!.
 \end{equation*}
\end{proposition}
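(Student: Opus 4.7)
The plan is to decompose each singular endomorphism $\phi$ of $\overline{H(m,n)}$ into three independent pieces of combinatorial data, count each piece separately, and verify that any combination of the three reconstructs a valid endomorphism. The three data will be: (i) the kernel partition of $\Z_n^m$, (ii) the image of $\phi$ as a subset of $\Z_n^m$, and (iii) the bijection between kernel classes and image vertices.

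First I would handle the kernel. By Theorem \ref{thm2}, $\phi$ is uniform of rank $n^{m-1}$, so it has $n^{m-1}$ fibres, each of size $n$. Since $\phi$ is a graph homomorphism, vertices in a common fibre are non-adjacent in $\overline{H(m,n)}$; hence each fibre is an independent set of size $n$ in $\overline{H(m,n)}$, equivalently a clique of size $n$ in $H(m,n)$, which is precisely a $1$-dimensional layer. Thus the kernel is a partition of $\Z_n^m$ into $1$-layers, which by definition has $P_1(m,n)$ possibilities. Next I would handle the image. Since $\phi$ is a colouring whose image is a clique of size $n^{m-1}$, the image is a maximal clique of $\overline{H(m,n)}$, and by the earlier lemma identifying such cliques with Latin hypercubes, this gives $\#\LHC(m-1,n)$ choices. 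Finally, once the kernel and the image are fixed, the map $\phi$ is determined by a bijection from the $n^{m-1}$ fibres to the $n^{m-1}$ image vertices, contributing $(n^{m-1})!$ further choices.

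It then remains to check that the assignment (partition of $\Z_n^m$ into $1$-layers, maximal clique, bijection) $\longmapsto \phi$ really is a bijection onto the set of singular endomorphisms. Injectivity is clear, because each of the three data is recovered canonically from $\phi$ (kernel, image, and the induced map on fibres). Surjectivity, that \emph{every} such triple defines a graph endomorphism, is the only substantive point: if $u \sim v$ in $\overline{H(m,n)}$, then $u$ and $v$ lie in different fibres (because fibres are independent sets in $\overline{H(m,n)}$), so their assigned images are two distinct vertices of the chosen clique and are therefore adjacent. This single observation is the crucial reason the bijection condition imposes no further restriction, and it is the only step where one might expect a subtle obstruction; once it is verified, the three independent counts multiply to give the stated product $P_1(m,n)\cdot \#\LHC(m-1,n)\cdot (n^{m-1})!$.
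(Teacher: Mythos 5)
Your proposal is correct and follows essentially the same decomposition as the paper's own proof: kernel partition into $1$-layers (counted by $P_1(m,n)$), image a maximal clique identified with a Latin hypercube (counted by $\#\LHC(m-1,n)$), and a free bijection between fibres and image points (counted by $(n^{m-1})!$). You simply spell out more carefully than the paper the converse direction --- that every such triple yields a valid endomorphism --- which the paper dispatches in one sentence.
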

\begin{proof}
 Let $\phi$ be a singular endomorphism. Then, there is a partition of $\Z_n^m$ into $1$-dimensional layers such that each part is collapsed onto a single point in the image of $\phi$. However, the image is a Latin hypercube of dimension $m-1$, class $1$ and order $n$ and consists of $n^{m-1}$ points. Thus, there are $n^{m-1}!$ choices to match the parts of the partitions with the points of its image. Conversely, this construction provides a singular endomorphism.
\end{proof}

\begin{proposition}
 The number of singular endomorphism of $\overline{H(m,n,m)}$ is given by
 \begin{equation*}
    P_2(m,n)\cdot h_{m-1}(m,n)\cdot (n^{m-1})!,
 \end{equation*}
 with $h_{m-1}(m,n)=mn$.
\end{proposition}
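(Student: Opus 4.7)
The strategy mirrors the counting argument already used for $\overline{H(m,n)}$: decompose the data defining a singular endomorphism into (i) a choice of image, (ii) a choice of kernel partition, and (iii) a bijection between the kernel classes and the image points.

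First I would invoke the previous theorem establishing that $\overline{H(m,n,m)}$ is a pseudo-core whose singular endomorphisms are uniform. From that, any singular endomorphism $\phi$ has rank equal to the clique number $n^{m-1}$, and its image is a maximum clique, which by the earlier lemma is an $(m-1)$-dimensional layer. There are $h_{m-1}(m,n)=mn$ such layers, accounting for the factor $h_{m-1}(m,n)$.

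Next I would analyse the kernel. Every kernel class of $\phi$ must be an independent set of $\overline{H(m,n,m)}$ (otherwise $\phi$ would contract an edge into a loop), and independent sets of $\overline{H(m,n,m)}$ are exactly cliques of $H(m,n,m)$. Since $\phi$ is uniform of rank $n^{m-1}$, each kernel class has size $n^{m}/n^{m-1}=n$, which equals the clique number of $H(m,n,m)$; hence each kernel class is a maximal clique of $H(m,n,m)$. Therefore the kernel is precisely a partition of $\Z_n^m$ into maximal cliques of $H(m,n,m)$, and by definition such partitions are counted by $P_2(m,n)$. Finally, once the image layer and the kernel partition are fixed, specifying $\phi$ is equivalent to specifying a bijection between the $n^{m-1}$ kernel classes and the $n^{m-1}$ points of the image, contributing the factor $(n^{m-1})!$.

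The step that actually requires justification (rather than bookkeeping) is the converse: that every triple (image layer, clique partition, bijection) yields a genuine endomorphism. Here I would argue as follows. Let $x\sim y$ in $\overline{H(m,n,m)}$. Since the kernel classes are cliques of $H(m,n,m)$, any two points lying in the same class differ in all coordinates and so are non-adjacent in $\overline{H(m,n,m)}$. Hence $x$ and $y$ must belong to distinct classes, so $\phi(x)\ne\phi(y)$; as both images lie in the target $(m-1)$-layer, which is a clique of $\overline{H(m,n,m)}$, they are adjacent. Thus $\phi$ is a graph homomorphism, and the count is tight: multiplying the three factors gives exactly $P_2(m,n)\cdot h_{m-1}(m,n)\cdot (n^{m-1})!$. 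The main obstacle is the verification that kernel classes really must be \emph{maximal} cliques of $H(m,n,m)$ rather than arbitrary independent sets of $\overline{H(m,n,m)}$, but this is forced by the uniformity established earlier combined with the equality between the size of a kernel class and the clique number of $H(m,n,m)$.
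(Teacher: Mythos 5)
Your proof is correct and follows the same three-factor decomposition (image layer, kernel partition into maximal cliques of $H(m,n,m)$, bijection between classes and image points) as the paper's own argument. You additionally verify the converse direction—that every such triple yields a genuine endomorphism—which the paper leaves implicit here, so your write-up is if anything more complete.
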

\begin{proof}
 Let $\phi$ be a singular endomorphism. Then, its kernel classes form a section-regular partition. Each part is a maximal coclique which is a maximal clique of $H(m,n,m)$. Moreover, the image of $\phi$ is a maximal layer and there are $(n^{m-1})!$ choices to match the parts of the partition with the points of the image.
\end{proof}
%

In fact, the number $P_2(m,n)$ is the number of semi-reduced Latin hypercubes of class $m-1$ and order $n$ as follows from the definition and Theorem \ref{theoremstructureofendomorphismsforotherhammingdistances}. So, for $P_2(m,n)$ the values in Table \ref{table2} are taken from \cite{schaeferlatinexistence}, for $n\geq 3$.


\section{The Hamming Graph for other Hamming distances}\label{section5}

Up to now, the set $S$ of distances was one of the following $\{1\},\{2,...,m\},\{m\}$ or $ \{1,...,m-1\}$. But what about other distances? In this section, we consider the set of consecutive numbers $S=\{1,...,k\}$ and prove Theorem \ref{thm4}. 

For this choice of $S$ the maximal cliques of $H(m,n,S)$ are the layers of dimension $k$, and as for $H(m,n)$, the singular endomorphisms have image a $k$-layer.
\begin{lemma}
 Let $\phi$ be a singular endomorphism of $H(m,n,S)$, for $S=\{1,...,k\}$, and let $l$ be an $s$-dimensional layer. Then, $\phi(l)$ is a layer of dimension $d$, where $k\leq d\leq s$.
\end{lemma}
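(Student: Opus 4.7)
The plan is to prove this by induction on $s \geq k$, following the strategy of Lemma \ref{lemmaonhypercubes} but with $k$-layers playing the role that lines (i.e.\ $1$-layers) played there, since the maximal cliques of $H(m,n,S)$ with $S=\{1,\ldots,k\}$ are precisely the $k$-layers. For the base case $s=k$ the layer $l$ is itself a maximal clique: any two of its points lie at Hamming distance at most $k$ and are therefore adjacent, so $\phi$ sends them to distinct images. Consequently $\phi$ is injective on $l$, and $\phi(l)$ is a clique of the maximum size $n^{k}$, i.e.\ a $k$-layer, giving $d=k$.

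For the inductive step I would assume the claim for $s$ and let $l = a + \langle e_{i_1}, \ldots, e_{i_{s+1}} \rangle$ be an $(s+1)$-layer. I then split $l$ into $n$ parallel $s$-layers $l_1, \ldots, l_n$ by fixing the coordinate $i_{s+1}$. The induction hypothesis yields that each $\phi(l_j)$ is a $d_j$-layer with $k \leq d_j \leq s$. One then argues, exactly as in Lemma \ref{lemmaonhypercubes}, that all $d_j$ are equal to a common value $d$, and that the images $\phi(l_j)$ either all coincide --- in which case $\phi(l)$ equals this common $d$-layer --- or are pairwise parallel and assemble into a $(d+1)$-layer which is $\phi(l)$. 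In either case $k \leq \dim \phi(l) \leq s+1$, closing the induction.

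The compatibility of the $\phi(l_j)$ is the technical heart and would be established via those maximal cliques $c$ of $l$ that span the direction $e_{i_{s+1}}$: such a $k$-layer meets every slice $l_j$ in a $(k-1)$-layer, and by the base case $\phi(c)$ is itself a $k$-layer. The pieces $\phi(c\cap l_j)$ are then $(k-1)$-layers sitting inside both $\phi(c)$ and $\phi(l_j)$, and they control the relative position of the slices. If two slices had differing dimensions $d_1\neq d_2$, one could choose $c$ so that the corresponding $(k-1)$-face of $\phi(l_2)$ cannot be matched by a parallel $(k-1)$-face of $\phi(l_1)$, whereupon $\phi(c)$ would fail to be a $k$-layer --- the direct analogue of the ``impossible configuration'' of Figure \ref{figureforH(m,n)}.

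The hard part will be making this $k$-layer matching step rigorous. For $k=1$ the original argument only demands that one line between two parallel slices map to a line, whereas here one must verify that the $(k-1)$-dimensional intersections across all $n$ slices sit consistently inside a single $k$-layer of the image, and in particular that the extra direction in which $\phi(l_j)$ points out of its $(k-1)$-face is the same for every $j$. I expect this to require an intermediate uniformity statement in the spirit of the last sentence of the proof of Lemma \ref{lemmaonhypercubes}, namely that each $s$-layer is collapsed uniformly onto its image; this can in turn be derived by iterating the matching argument on subconfigurations consisting of a $(k-1)$-layer running through two adjacent slices.
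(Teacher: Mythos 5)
Your proposal follows essentially the same route as the paper: induction on the layer dimension $s$, splitting the $(s+1)$-layer into $n$ parallel $s$-layers, invoking the induction hypothesis on each slice, and using maximal cliques (the $k$-layers) running between slices to force the images to have a common dimension and assemble into a single layer. The paper's own proof is no more detailed on the matching step you flag as the technical heart (it simply asserts the ``impossible configuration'' argument carries over with lines replaced by $k$-layers), so your treatment, including the explicit base case $s=k$, is at least as complete.
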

\begin{proof}
 We will use induction on $m,s$ and $k$. Let $A(m,k,s)$ be the hypothesis. From the results on the Hamming graph the hypothesis $A(m,1,s)$ is always satisfied; also, $A(m,s,s)$ clearly holds for every $m$ and $s$. So, assume the hypothesis holds for $A(m,k,s)$ and show it holds for $A(m,k,s+1)$. We show that this is true by using the same argument as for $H(m,n)$.
 
 In detail, let $l$ be an $(s+1)$-layer. Then, we can split $l$ into parallel $s$-layers $l_1,...,l_n$. By induction $\phi(l_i)$ is an $s$-layer or a layer of smaller dimension, for all $i$. Now, if the dimensions of, say, $\phi(l_1)$ and $\phi(l_2)$ would differ, then there would be two maximal cliques (lines, planes, ...) $c_1$ and $c_2$ connecting $l_1$ and $l_2$ such that at least one of $\phi(c_1)$ and $\phi(c_2)$ would not be a maximal clique (line, plane, ...) in the image of $\phi$; a contradiction. Therefore, all $\phi(l_i)$ have the same dimension, say, $d$.
 
 Using the same argument, we see that each $l_i$ is collapsed to $\phi(l_i)$, and that the layers $\phi(l_i)$ must form a $(d+1)$-layer. Thus, the image $\phi(l)$ is a $(d+1)$-layer. Similarly, like in Lemma \ref{lemmaonhypercubes} we obtain uniformity.
 \end{proof}

 Consequently, we obtain the same results as for $H(m,n)$. The following statements are combined in Theorem \ref{thm4}.

\begin{corollary}
 For any singular endomorphism $\phi$ there is a maximal number $s$, such that $\phi$ maps $s$-dimensional layers to $k$-dimensional layers.
\end{corollary}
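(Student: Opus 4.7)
The plan is to read this corollary off the preceding lemma by a short finiteness argument. First I would define the set
\[
T = \{\, s \in \{0,1,\ldots,m\} : \phi \text{ sends every } s\text{-layer to a } k\text{-layer}\,\}.
\]
By the previous lemma, the image of any $s$-layer under $\phi$ is a $d$-layer with $k \leq d \leq s$; when $s = k$ this forces $d = k$, so every $k$-layer is mapped to a $k$-layer and therefore $k \in T$. Hence $T$ is non-empty, and since it is also bounded above by $m$, it admits a maximum, which is the $s$ claimed in the statement.

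A minor point worth checking, in order to make the maximum substantive rather than an isolated value, is that $T$ is downward closed above $k$: if $s \in T$ and $k \leq s' < s$, then any $s'$-sublayer of a fixed $s$-layer $l$ is mapped by $\phi$ into the $k$-layer $\phi(l)$, so its image has dimension at most $k$; the preceding lemma pins this dimension at exactly $k$, yielding $s' \in T$. Consequently $T = \{k, k+1, \ldots, s_{\max}\}$, so the ``maximal $s$'' in the corollary is unambiguous and corresponds to the largest layer dimension that $\phi$ still collapses all the way down to a $k$-layer.

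I do not anticipate any real obstacle: the entire content of the corollary sits inside the preceding lemma, and the remainder is finiteness bookkeeping. The only thing to be careful about is to phrase ``$\phi$ maps $s$-dimensional layers to $k$-dimensional layers'' with the universal quantifier over $s$-layers (as in the definition of $T$ above), which is what the lemma actually delivers.
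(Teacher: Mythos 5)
Your argument is correct and matches the paper's treatment: the paper states this corollary without proof as an immediate consequence of the preceding lemma, and your finiteness argument (the set $T$ contains $k$ since the lemma forces $d=k$ when $s=k$, is bounded above by $m$, and is downward closed because a $d$-layer inside a $k$-layer must have $d\leq k$) is exactly the bookkeeping the paper leaves implicit.
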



\begin{theorem}\label{theoremstructureofendomorphismsforotherhammingdistances}
 Let $S=\{1,...,k\}$. The singular endomorphisms of $H(m,n,S)$ are uniform and have rank $n^{d}$ with image a $d$-layer, for some $k \leq d \leq m-1$,
\end{theorem}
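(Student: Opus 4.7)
The plan is to apply the preceding lemma to the entire ambient space. First I would observe that $\mathbb{Z}_n^m$ is itself the unique $m$-dimensional layer, so by the lemma, $\phi(\mathbb{Z}_n^m)$ is a layer of some dimension $d$ with $k \leq d \leq m$. Since $\phi$ is singular (not injective), its image is a proper subset of $\mathbb{Z}_n^m$, which rules out $d = m$; hence $k \leq d \leq m-1$. A $d$-layer contains exactly $n^d$ points, so the rank of $\phi$ is $n^d$ and its image is a $d$-layer, as required.

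For uniformity I would extract a little more from the induction step in the preceding lemma than its final statement explicitly records. At each inductive step, an $(s+1)$-layer is split into parallel $s$-layers $l_1,\dots,l_n$, and the lemma's argument not only forces the images $\phi(l_i)$ to share a common dimension $d$, but also shows that each $l_i$ is collapsed uniformly onto $\phi(l_i)$; otherwise one would produce a maximal clique transverse to two of the $l_i$ whose image would fail to be a clique. Propagating this uniformity from the base case up through $s=m$ forces every fiber of $\phi$ to have the same cardinality, which must therefore be $n^m / n^d = n^{m-d}$.

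The hard part will be confirming that the uniformity half of the induction step genuinely transfers from the $H(m,n)$ setting (where the transverse connectors are edges) to the general $H(m,n,S)$ setting with $S=\{1,\dots,k\}$ (where maximal cliques are $k$-dimensional layers). One must check that whenever $\phi$ identifies two points inside a single $s$-layer $l_i$, there is a $k$-dimensional maximal clique through one of those points, together with a parallel such clique meeting $l_j$, whose image would be forced to violate the clique condition. The author signals this transfer with the parenthetical ``lines, planes, $\dots$'' in the lemma's proof; once it is granted, the theorem itself is essentially a one-line corollary of the preceding lemma, with the dimension range $k \leq d \leq m-1$ recovered directly from the lemma's bounds together with singularity.
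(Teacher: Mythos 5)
Your proposal is correct and follows exactly the route the paper intends: its own proof of this theorem is just ``This follows easily from the previous results,'' meaning apply the preceding lemma to the full $m$-layer, use singularity to exclude $d=m$, and inherit uniformity from the lemma's induction step (which the paper itself only gestures at via ``Similarly, like in Lemma \ref{lemmaonhypercubes} we obtain uniformity''). You have in fact spelled out the argument, including the one point needing care (transferring the uniformity/clique argument from edges to $k$-dimensional maximal cliques), more explicitly than the paper does.
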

\begin{proof}
 This follows easily from the previous results.
\end{proof} 

Again it is obvious that the pre-images form Latin hypercubes of class $k$.

\begin{corollary}
 Let $S=\{1,...,k\}$. The singular endomorphisms of $H(m,n,S)$ of minimal rank are Latin hypercubes of class $k$.
\end{corollary}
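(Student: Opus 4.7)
The plan is to unpack the definitions and assemble the pieces already established. Let $\phi$ be a singular endomorphism of $H(m,n,S)$, $S=\{1,\ldots,k\}$, of minimal rank. By Theorem \ref{theoremstructureofendomorphismsforotherhammingdistances}, the minimal possible rank is $n^{k}$, the endomorphism is uniform, and its image is a $k$-layer $L$. Uniformity then forces $|\phi^{-1}(y)| = n^{m-k}$ for every $y\in L$, which is precisely the multiplicity condition appearing in the definition of an $\LHC(m,n,k)$: each of the $n^{k}$ ``symbols'' (the points of $L$) is attained by $\phi$ on exactly $n^{m-k}$ vertices of $\Z_{n}^{m}$.

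Second, I would show that $\phi$ maps every $k$-layer of the domain bijectively onto $L$. Invoking the lemma preceding Theorem \ref{theoremstructureofendomorphismsforotherhammingdistances} with $s=k$, for any $k$-layer $l$ the image $\phi(l)$ is a $d$-layer with $k\leq d\leq k$, hence itself a $k$-layer. Since $\phi(l)\subseteq \phi(\Z_{n}^{m})=L$ and both sets have cardinality $n^{k}$, we conclude $\phi(l)=L$, so $\phi|_{l}\colon l\to L$ is a bijection.

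Third, I would read off the Latin hypercube structure. Identifying $L$ with an alphabet of $n^{k}$ symbols, the map $\phi$ becomes an $n\times\cdots\times n$ ($m$ times) array in this alphabet, with the property that each symbol occurs exactly once in every $k$-dimensional layer of the array. This matches exactly the definition of $\LHC(m,n,k)$ given in the introduction, so the singular endomorphism corresponds to a Latin hypercube of dimension $m$, order $n^{k}$, and class $k$. Conversely, any such $\LHC(m,n,k)$ (placed on the vertex set with values ranging in a fixed $k$-layer $L$) yields a map that preserves adjacency in $H(m,n,S)$, because two vertices at Hamming distance at most $k$ lie in a common $k$-layer and thus receive distinct labels.

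There is no serious obstacle here once Theorem \ref{theoremstructureofendomorphismsforotherhammingdistances} and its preparatory lemma are in hand; the content is simply to verify the defining conditions of $\LHC(m,n,k)$ against the properties of $\phi$. The only point requiring care is confirming that $\phi(l)=L$ for \emph{every} $k$-layer $l$, rather than merely some $k$-layer lying in $L$, which is exactly why the cardinality comparison at the end of the second step is needed.
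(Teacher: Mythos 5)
Your argument is correct and is essentially the one the paper intends: the paper offers no written proof (it declares the statement obvious from Theorem \ref{theoremstructureofendomorphismsforotherhammingdistances} and the preceding lemma, in analogy with Lemma \ref{lemmapreimageisalatinhypercube}), and your proposal simply fleshes out that same unpacking — minimal rank $n^{k}$ with image a $k$-layer, bijectivity on each $k$-layer via the cardinality comparison, and uniformity giving the multiplicity $n^{m-k}$ required by the definition of $\LHC(m,n,k)$.
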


Before we turn to the next section, we consider the cliques of the Hamming graph where $S=\{k+1,...,m\}$, as their maximal cliques form Latin hypercubes. In this regard we would like to remind the reader of MDS-codes. Exhaustive literature can be found this topic, but we refer to \cite[p. 71]{huffmanbook}.

In coding theory a $q$-ary code of length $n^{\ast}$, size $M^{\ast}$, and minimum distance $d^{\ast}$ is an $q$-ary $(n^{\ast},M^{\ast},d^{\ast})$ code. This code is a \emph{maximum distance separable} code (MDS-code) if it is an $q$-ary $(n,q^{k},n-k+1)$ code, where $1\leq k\leq n$.

One big question in the theory of MDS-codes is the classification of MDS-codes with regards to their parameters, meaning that we want to find all the parameters for which MDS-codes exist. This problem has been known for a long time, however a recent contribution is given by the Kokkala et. al \cite{kokkala2015classification}.

Because the vertices of $H(m,n,S)$ can be regarded as codewords, we obtain MDS-codes with the following parameters.


\begin{lemma}
 For $S=\{k+1,...,m\}$, the maximal cliques of $H(m,n,S)$ of size $n^{m-k}$ can be identified with $n$-ary $(m,n^{m-k},k+1)$ MDS-codes.
\end{lemma}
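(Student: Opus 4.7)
The plan is a direct correspondence via the Singleton bound. First I would unwind the definitions: by the definition of $H(m,n,S)$ with $S=\{k+1,\ldots,m\}$, a subset $C\subseteq \Z_n^m$ is a clique if and only if every pair of distinct elements of $C$ has Hamming distance in $\{k+1,\ldots,m\}$, equivalently at least $k+1$. Reading the vertices as codewords over the alphabet $\Z_n$, this is precisely the statement that $C$ is an $n$-ary code of length $m$ with minimum distance $d \geq k+1$.

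Next I would invoke the Singleton bound, which gives $|C|\leq n^{m-d+1}\leq n^{m-k}$. Hence every clique in $H(m,n,S)$ has size at most $n^{m-k}$, and a clique $C$ attaining this bound must satisfy $d=k+1$, since otherwise the strict inequality $n^{m-d+1}<n^{m-k}$ would rule out $|C|=n^{m-k}$. These are exactly the parameters of an $n$-ary $(m,n^{m-k},k+1)$ MDS-code. Conversely, any such MDS-code consists of $n^{m-k}$ words of length $m$ that pairwise differ in at least $k+1$ coordinates, hence forms a clique of size $n^{m-k}$ in $H(m,n,S)$. Combining the two directions yields the claimed bijection.

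Essentially the statement is the Singleton bound dressed in the graph-theoretic language of $H(m,n,S)$, so there is no real obstacle. The only small subtlety worth noting is that the maximal-size hypothesis $|C|=n^{m-k}$ pins the minimum distance down to exactly $k+1$; the bound delivers this immediately, so the proof reduces to a one-line invocation together with an inspection of parameters.
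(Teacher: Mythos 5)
Your proof is correct. The paper reaches the same conclusion by a slightly different (though ultimately equivalent) route: instead of citing the Singleton bound, it observes that two vertices of $H(m,n,S)$ with $S=\{k+1,\dots,m\}$ are adjacent exactly when they do not share a $k$-dimensional layer, so a clique meets each $k$-layer in at most one point; since a parallel system of $k$-layers partitions $\Z_n^m$ into $n^{m-k}$ layers, a clique of size $n^{m-k}$ must meet every $k$-layer exactly once, which is the orthogonal-array characterization of an MDS-code. This layer-counting is of course nothing but the standard proof of the Singleton bound together with its equality case, so the two arguments coincide in substance. What your version buys is brevity and an explicit determination of the minimum distance ($d=k+1$ forced by attaining the bound), plus the free observation that $n^{m-k}$ is the clique number, so ``maximal'' comes for nothing; what the paper's version buys is the explicit one-point-per-$k$-layer structure, which is precisely what the subsequent corollary uses to read the clique off as a Latin hypercube $\LHC(m-k,n)$. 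Either way the statement is established; no gap.
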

\begin{proof}
 For $S=\{k+1,...,m\}$, two vertices in $H(m,n,S)$ are adjacent if they are not in the same $k$-dimensional layer. Thus, if there is an MDS code with those parameters, then it forms a maximal clique. On the other hand, if we pick a maximal clique $c$ of this size, then each $k$-dimensional layer contains a single point of $c$. Given this, the clique has the properties of an MDS-code. 
\end{proof}

However, this result can be interpreted as a direct result on Latin hypercubes.

\begin{corollary}
 The maximal cliques of size $n^{m-k}$ of $H(m,n,S)$ are Latin hypercubes $\LHC(m-k,n)$, where $S=\{k+1,...,m\}$.
\end{corollary}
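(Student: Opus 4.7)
By the preceding lemma, a maximal clique $C$ of size $n^{m-k}$ in $H(m,n,S)$ with $S=\{k+1,\ldots,m\}$ is identified with an $n$-ary $(m,n^{m-k},k+1)$ MDS-code, so any two distinct codewords in $C$ agree in at most $m-k-1$ coordinates. The plan is to use $m-k$ of the $m$ coordinates to index the cells of an $(m-k)$-dimensional grid over $\mathbb{Z}_n$, and then to show that the remaining $k$ coordinates, viewed as functions on that grid, satisfy the Latin property.

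First I would fix a set $I$ of $m-k$ coordinates and consider the projection $\pi\colon C\to \mathbb{Z}_n^{m-k}$ onto $I$. Two codewords agreeing on all of $I$ would agree in at least $m-k$ coordinates in total, contradicting the Singleton-tight MDS bound; hence $\pi$ is injective, and since $|C|=n^{m-k}$ it is in fact a bijection. Composing each of the remaining $k$ coordinates with $\pi^{-1}$ yields functions $\phi_1,\ldots,\phi_k\colon \mathbb{Z}_n^{m-k}\to \mathbb{Z}_n$ whose joint graph recovers $C$.

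Next, for each $\phi_j$, I would verify the Latin-hypercube condition on every axis-parallel line $L\subset \mathbb{Z}_n^{m-k}$. Two cells of $L$ agree in $m-k-1$ of the index coordinates, so their codewords in $C$ must differ in at least $k$ non-index coordinates in order to realise Hamming distance at least $k+1$; as there are exactly $k$ such coordinates, they differ in all of them. Consequently $\phi_j$ restricted to $L$ attains $n$ distinct values and thus permutes $\mathbb{Z}_n$, which is precisely the defining property of $\LHC(m-k,n)$. The only real obstacle here is bookkeeping: the assertion ``$C$ is a Latin hypercube $\LHC(m-k,n)$'' is to be read as the statement that each $\phi_j$ individually is such a hypercube (equivalently, the tuple $(\phi_1,\ldots,\phi_k)$ packages them into a single class-$k$ Latin hypercube on $\mathbb{Z}_n^{m-k}$). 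Beyond this, the argument is the classical equivalence between Singleton-tight MDS codes and orthogonal arrays of appropriate strength.
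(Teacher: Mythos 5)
Your proposal is correct and follows essentially the same route as the paper: invoke the preceding lemma to identify the clique with an $n$-ary $(m,n^{m-k},k+1)$ MDS-code, project onto $m-k$ position coordinates, and read the remaining coordinate(s) as the entries of a Latin hypercube. In fact your write-up is more complete than the paper's, which simply drops coordinates and asserts the result without verifying the Latin property on axis-parallel lines as you do.
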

\begin{proof}
 By the preceding lemma, a maximal clique provides an MDS-code. As the codewords are of length $m$, we only pick $m-k+1$ of them and drop the remaining. This gives us a set of codewords of length $m-k+1$ where the first $m-k$ coordinates describe the position coordinates and the last coordinate as the entry coordinate of a Latin hypercube. 
\end{proof}


\section{The Hamming Graph over Cuboids}\label{section6}

The aim of this section is to generalize the results on the Hamming graphs to Hamming graphs over hypercuboids and verify Theorem \ref{thm5}. 

The graph $H(n_1,...,n_m,S)$ is a natural generalisation of $H(m,n,S)$. In particular, for $S=\{1\}$ it is the Cartesian product of (distinct) complete graphs.
\[H(n_1,...,n_m)=K_{n_1} \sq K_{n_2} \sq \cdots \sq K_{n_m},\quad \text{ for } n_1\geq n_2\geq \cdots \geq n_{m}\geq 2.\]
It is clear that these graphs also admit singular endomorphisms. In particular, if $m=2$, then $H(n_1,n_2)$ is an $n_1\times n_2$ grid which admits Latin rectangles as singular endomorphisms. Similarly, in higher dimensions Latin hypercuboids of class $1$ represent singular endomorphisms. So, the goal of this section is to describe the singular endomorphisms of $H(n_1,...,n_m)$.

\subsection{The Rectangle}

Like the square lattice graph, the rectangular lattice graph $H(n_1,n_2)$ admits only colourings with $n_1$ colours as singular endomorphisms.

\begin{lemma}
 For $n_1>n_2>1$, the singular endomorphisms of $K_{n_1} \sq K_{n_2}$ are of rank $n_1$, and they correspond to Latin rectangles. Their number is
 \begin{equation*}
  n_2\cdot \#\text{Latin rectangles}.
 \end{equation*}
\end{lemma}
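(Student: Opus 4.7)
The plan is to adapt the argument used for the square lattice graph $L_2(n)$ to the rectangular case, exploiting the inequality $n_1>n_2$ to force the image of any singular endomorphism into a row (of size $n_1$) rather than a column (of size $n_2$). First, I recall that the maximal cliques of $K_{n_1}\sq K_{n_2}$ are the rows $r_j=\{(i,j):i\in\Z_{n_1}\}$ of size $n_1$ and the columns $c_i=\{(i,j):j\in\Z_{n_2}\}$ of size $n_2$, and that every clique is contained in a row or a column. Since any endomorphism maps edges to edges, $\phi$ must be injective on every clique; hence each row maps injectively onto a clique of size $n_1$, which, because $n_1>n_2$, must be another row.

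Suppose now $\phi$ is singular, so $\phi(a)=\phi(b)$ for some distinct $a=(i_1,j_1)$, $b=(i_2,j_2)$; non-adjacency of $a,b$ forces $i_1\neq i_2$ and $j_1\neq j_2$. The image rows $\phi(r_{j_1})$ and $\phi(r_{j_2})$ share the point $\phi(a)=\phi(b)$, so they coincide at a common row $R$. To show that every row collapses to $R$, pick any $j_3\notin\{j_1,j_2\}$ and consider the column $c_{i_1}$. Its image is a clique of size $n_2$ containing the two distinct points $\phi(i_1,j_1),\phi(i_1,j_2)$, both of which lie in $R$; since a clique containing two vertices of the same row cannot sit inside a column, $\phi(c_{i_1})\subseteq R$. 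In particular $\phi(i_1,j_3)\in R$, so $\phi(r_{j_3})=R$ by disjointness of distinct rows. Thus the image of $\phi$ equals $R$, giving rank $n_1$.

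To recognise $\phi$ as a Latin rectangle, write $R=\{(i,j_0):i\in\Z_{n_1}\}$ and define $\sigma:\Z_{n_1}\times\Z_{n_2}\to\Z_{n_1}$ by $\phi(i,j)=(\sigma(i,j),j_0)$. For each fixed $j$, the map $i\mapsto\sigma(i,j)$ is a bijection of $\Z_{n_1}$ (because $\phi$ restricted to $r_j$ is a bijection onto $R$), and for each fixed $i$, the map $j\mapsto\sigma(i,j)$ is an injection into $\Z_{n_1}$ (because $\phi$ is injective on $c_i$). This is exactly an $n_2\times n_1$ Latin rectangle, and conversely any such rectangle together with a choice of target row $R$ defines a valid endomorphism. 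Multiplying the $n_2$ choices for $R$ (one for each $j_0\in\Z_{n_2}$) by the number of Latin rectangles yields the stated count.

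The main obstacle is the second paragraph: propagating the equality $\phi(r_{j_1})=\phi(r_{j_2})=R$ to every other row, which is what the column-clique trick accomplishes. Once this is in hand, extracting the Latin-rectangle structure and counting are routine. The asymmetry $n_1>n_2$ is used only to rule out columns as candidate images, which is precisely why the multiplicative factor is $n_2$ (rather than $n_1+n_2$ or $2n$, as in the symmetric case $n_1=n_2$ treated earlier for $L_2(n)$).
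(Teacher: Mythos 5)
Your proof is correct and follows the same route as the paper, whose own proof is only a two-line remark ("use the same arguments as for $K_n \sq K_n$, keeping in mind that a bigger clique cannot be mapped to a smaller clique"); your key step ruling out columns as images of rows via $n_1>n_2$ is exactly that remark made precise. The column-clique propagation argument and the explicit extraction of the Latin rectangle, which the paper leaves entirely to the reader, are supplied correctly, and the count of $n_2$ target rows times the number of Latin rectangles matches the statement.
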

\begin{proof}
Using the same arguments as for $K_n\sq K_n$, we deduce that every singular endomorphism is a colouring. The only thing we have to keep in mind is that a bigger clique cannot be mapped to a smaller clique. Thus, all its singular endomorphisms are colourings with $n_1$ colours.
\end{proof}

\subsection{The General Hypercuboid}
%
To generalize this result to higher dimensions, we need to assume that $n_1,...,n_{m-1}\geq 3$ and $n_{m}\geq 2$, for, if more than one parameter would have value $2$, there would be space for the non-uniform endomorphisms (simply collapse the diagonal vertices in the $2\times 2$ subarray). Note, that a singular endomorphism cannot map bigger cliques to smaller cliques.

\begin{lemma}\label{lemmaonhypercuboids}
 Let $\phi$ be a singular endomorphism of $H(n_1,...,n_m)$ and $l$ a $k$-layer with one of the sides of size $n_1$. Then, $\phi(l)$ is a $d$-layer, where $1\leq d\leq k$.
 Also, $\phi$ is uniformly.
\end{lemma}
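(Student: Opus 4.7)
The plan is to adapt the inductive argument of Lemma~\ref{lemmaonhypercubes} to the cuboidal setting. The hypothesis that $l$ has a side of length $n_1$ is decisive: it guarantees that $l$ contains cliques of the maximum possible size in $H(n_1,\dots,n_m)$, and since a singular endomorphism cannot map a clique to a strictly smaller clique (as noted just before the lemma), such maximum cliques must survive as maximum cliques. This rigidity is what rescues the argument in the presence of heterogeneous side lengths.

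I would proceed by induction on $k$. For $k=1$, the layer $l$ is a copy of $K_{n_1}$ and hence a maximum clique; it must therefore be mapped onto another $K_{n_1}$, that is, onto a $1$-layer along some $n_1$-direction, so $d=1$. For the inductive step, assume the result for $k$-layers and let $l$ be a $(k+1)$-layer with a side of length $n_1$. I would slice $l$ into parallel $k$-layers $l_1,\dots,l_r$ along a coordinate direction other than the $n_1$-direction, so that each $l_i$ still carries a side of length $n_1$; the induction hypothesis then yields $\phi(l_i)$ a $d_i$-layer with $1 \leq d_i \leq k$. The obstruction argument depicted in Figure~\ref{figureforH(m,n)} forces $d_1=\cdots=d_r=d$ and forces the images $\phi(l_i)$ to be parallel $d$-layers whose union is a single $(d+1)$-layer, giving $\phi(l)$ dimension $d+1 \leq k+1$. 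Uniformity of $\phi$ on $l$ follows from the same rigidity: a non-uniform fibre would expose a maximal clique whose image is not a maximal clique, contradicting the homomorphism property.

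The main obstacle I anticipate is maintaining the invariant that each slice $l_i$ really does retain a side of length $n_1$; without this, the induction hypothesis cannot be applied and the clique-size comparison breaks down. This is exactly why the slicing must always be performed along a direction different from the $n_1$-direction, and it is also why the hypothesis on $l$ is essential in the statement. Once this bookkeeping is in place, the geometric content is the same as in the cubic case, and the remainder of the argument transfers from Lemma~\ref{lemmaonhypercubes} with only cosmetic changes.
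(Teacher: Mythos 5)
Your proposal is correct and follows essentially the same route as the paper: induct on the layer dimension, slice the $(k{+}1)$-layer along a direction other than an $n_1$-direction so that every slice retains a side of length $n_1$ (the paper slices along the shortest side, which achieves the same invariant), and then reuse the clique-obstruction argument from the cubic case to force equal image dimensions and uniformity. The emphasis you place on preserving the $n_1$-side through the induction is exactly the point the paper's proof relies on.
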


\begin{proof}
 We will use induction on $m$ and $k$. For small values the hypothesis holds: $A(2,1),A(2,2)$ and $A(m,1)$. Assume $A(m,k)$ holds and show $A(m,k+1)$.
 
 Let $l$ be a $(k+1)$-subarray $n_{i_{1}}\times \cdots \times n_{i_{k+1}}$, with $n_{i_{1}}\geq \cdots \geq n_{i_{k+1}}$ and $n_{i_1}=n_1$. We split $l$ into $k+1$ part  $l_1,...,l_{n_{k+1}}$ each containing a side of length $n_1$. From here the same argument as for $H(m,n)$ shows the result.
\end{proof}

\begin{theorem}
\begin{enumerate}
 \item The singular endomorphisms of $H(n_1,...,n_m)$ are uniform of rank $n_1\cdot \prod\limits_{i\in I} n_i$, where $I$ is a proper subset of $ \{n_2,...,n_m\}$.
 \item The singular endomorphisms of rank $n_1$ are Latin hypercuboids of class $1$.
\end{enumerate}
\end{theorem}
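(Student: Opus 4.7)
The plan is to derive both parts by applying Lemma \ref{lemmaonhypercuboids} to the full vertex set $V=\mathbb{Z}_{n_1}\times\cdots\times\mathbb{Z}_{n_m}$, which is itself an $m$-layer having a side of length $n_1$. The lemma immediately yields that $\phi(V)$ is a $d$-layer for some $d\leq m$ and that $\phi$ is uniform. Singularity of $\phi$ forces $d<m$: otherwise the image $d$-layer would have cardinality $\prod_{i=1}^m n_i=|V|$ and $\phi$ would be bijective.

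For part (1), I would record the image $d$-layer by the index set $J\subseteq\{1,\dots,m\}$ of its $d$ free coordinates, so that the rank of $\phi$ is $\prod_{j\in J}n_j$. The decisive input is that the maximum cliques of $H(n_1,\dots,n_m)$ are exactly the $1$-layers of length $n_1$ (their size coincides with the clique number), together with the observation right before Lemma \ref{lemmaonhypercuboids} that a larger clique cannot collapse onto a strictly smaller one. Therefore every $n_1$-clique of $V$ maps bijectively onto an $n_1$-clique inside $\phi(V)$, which forces $\phi(V)$ to contain at least one $n_1$-direction; that is, there exists $j_0\in J$ with $n_{j_0}=n_1$. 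Setting $I:=J\setminus\{j_0\}\subsetneq\{2,\dots,m\}$ — the properness recording $d<m$ — yields the advertised rank $n_1\cdot\prod_{i\in I}n_i$.

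For part (2), I would suppose the rank equals $n_1$, so the image $L$ is a single $1$-layer of length $n_1$. Labelling the points of $L$ by symbols $\{1,\dots,n_1\}$ turns $\phi$ into an $n_1\times n_2\times\cdots\times n_m$ array filled with these symbols. To check the class-$1$ Latin hypercuboid condition, I would use that every $1$-layer of the domain is a clique and that $\phi$ preserves adjacency, so $\phi$ injects every $1$-layer of $V$ into $L$. Consequently each symbol appears exactly once in each $1$-layer of length $n_1$ and at most once in each $1$-layer of smaller length — precisely the defining conditions of an $\LHC(n_1,\dots,n_m,1)$. The converse construction (every such hypercuboid defines a legitimate endomorphism) is immediate from the same adjacency bookkeeping.

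The main obstacle will be the bookkeeping in the case when several of the $n_i$ equal $n_1$: one has to verify that the image $d$-layer always accommodates at least one such coordinate, so that the decomposition $J=\{j_0\}\sqcup I$ is well-defined and the rank formula $n_1\cdot\prod_{i\in I}n_i$ is unambiguous. Beyond this subtlety, the argument is essentially the cuboidal analogue of the proof for $H(m,n)$, with Lemma \ref{lemmaonhypercuboids} playing the role that Lemma \ref{lemmaonhypercubes} played in the cubical case.
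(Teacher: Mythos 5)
Your proposal is correct and follows exactly the route the paper intends: the paper in fact states this theorem without any written proof, leaving it as a consequence of Lemma \ref{lemmaonhypercuboids} in the same way that Theorem \ref{theoremendomorphismsofH(m,n)} follows from Lemma \ref{lemmaonhypercubes}. Your additional details --- the clique-size argument showing the image layer must contain an $n_1$-direction, and the verification that rank-$n_1$ endomorphisms are precisely Latin hypercuboids of class $1$ --- are exactly the steps the paper omits, and they are sound.
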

%
%

Moreover, like for the cubic graphs when taking a set of consecutive distances $S=\{1,...,r\}$ the graphs $H(n_1,...,n_m,S)$ admit singular endomorphisms corresponding to Latin hypercubes of class $r$, and we will discuss these objects in the next chapter.

\begin{lemma}\label{lemmalatinhypercuboids}
 The singular endomorphisms of $H(n_1,...,n_m,S)$, for $S=\{1,...,k\}$, of minimal rank $n_1\cdots n_k$ are Latin hypercuboids of class $k$.
\end{lemma}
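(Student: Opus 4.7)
The plan is to combine two ingredients: (i) the cuboidal analogue of the layer-mapping results proved for $H(m,n,S)$ in the previous section, which forces the image of a minimal-rank singular endomorphism to be a $k$-layer of size $n_1\cdots n_k$; and (ii) the elementary but decisive observation that kernel classes of any graph endomorphism are independent sets of the source graph.

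First, I would adapt the induction behind Lemma \ref{lemmaonhypercubes} and Theorem \ref{theoremstructureofendomorphismsforotherhammingdistances} to the cuboidal setting, exactly as Lemma \ref{lemmaonhypercuboids} adapts Lemma \ref{lemmaonhypercubes} for $S=\{1\}$. Concretely, I would prove that if $\phi$ is a singular endomorphism of $H(n_1,\dots,n_m,S)$ and $l$ is an $s$-layer (with $s\ge k$), then $\phi(l)$ is a $d$-layer with $k\le d\le s$: split $l$ into parallel $s$-layers, apply induction, and rule out mismatched image dimensions on two such parallel pieces using the same impossible-configuration argument (if the dimensions differ, some connecting maximal clique fails to map to a maximal clique). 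Applying this to $l$ equal to the whole vertex set shows that every image is a $d$-layer with $k\le d\le m-1$. Choosing $d=k$ picks out endomorphisms of minimal rank, and a pigeonhole/uniformity argument (a bigger clique cannot be mapped to a smaller clique, since the image is a clique of the same size $n_1\cdots n_k$) forces the image to be a $k$-layer $L$ of maximal size $n_1\cdots n_k$, i.e. using the $k$ coordinates with the largest $n_i$.

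Second, identify the $n:=n_1\cdots n_k$ vertices of $L$ with a symbol set, so that $\phi$ becomes a map from the $n_1\times\cdots\times n_m$ array to these symbols. The Latin hypercuboid condition of class $k$ requires precisely two things: each $k$-layer with $n$ entries contains every symbol exactly once, and each $k$-layer with fewer than $n$ entries contains every symbol at most once. Both conditions reduce to the single statement that $\phi$ is injective on every $k$-layer.

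Injectivity on $k$-layers is the heart of the proof and follows from a one-line observation: kernel classes of a graph endomorphism are independent sets. If $\phi(a)=\phi(b)$ with $a\neq b$, then $a$ and $b$ are non-adjacent in $H(n_1,\dots,n_m,\{1,\dots,k\})$, so their Hamming distance exceeds $k$; hence they cannot lie in any common $k$-layer. Consequently $\phi$ is injective on each $k$-layer; a $k$-layer of maximal size $n$ maps bijectively onto $L$ and thus realises every symbol exactly once, while a smaller $k$-layer maps injectively and realises each symbol at most once. This matches the definition of $\LHC(n_1,\dots,n_m,k)$ verbatim, completing the proof. The main obstacle will be the first step: in the cuboidal case one must track sizes of cliques carefully, since different $k$-layers have different cardinalities, and argue that the minimal-rank image genuinely occupies the $n_1\cdots n_k$ layer rather than a smaller one; once this bookkeeping is in place, the identification with a Latin hypercuboid is immediate from independence of kernel classes.
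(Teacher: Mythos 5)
Your proposal is correct, and in fact it supplies more than the paper does: the paper states this lemma with no proof at all (the analogous statement for the cubic case in Section~\ref{section5} is likewise dismissed with ``it is obvious that the pre-images form Latin hypercubes of class $k$''). The decisive part of your argument is the second half, and it is exactly the right one-line reason: kernel classes of a graph endomorphism are independent sets, so two collapsed points have Hamming distance outside $S=\{1,\dots,k\}$, hence distance greater than $k$, hence lie in no common $k$-layer; therefore $\phi$ is injective on every $k$-layer, and since the rank is exactly $n_1\cdots n_k$, a full-size $k$-layer maps bijectively onto the image while smaller $k$-layers map injectively, which is verbatim the definition of $\LHC(n_1,\dots,n_m,k)$. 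One small remark: the first half of your plan (the cuboidal induction showing the image is a maximal $k$-layer) is not actually needed for this lemma --- the symbol set is simply the image of $\phi$, and the Latin hypercuboid conditions follow from injectivity on $k$-layers together with the rank count alone, regardless of whether the image is itself a layer. So the bookkeeping you flag as the main obstacle can be bypassed entirely; it is only relevant to the separate structural claims in Theorem~\ref{thm5}(1) and Lemma~\ref{lemmaonhypercuboids}.
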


\section{Problems}

A further problem which arise from this research and possibly straightforward to state is to find all the singular endomorphisms of the graphs $H(n_1,...,n_m,S)$, for any choice of $n_i$, $i=1,...,m$, and $S$. In this regard, a concrete question is

\begin{problem}
 Given $S$, for which parameters $n_1,...,n_m$ do singular endomorphisms exist. If $S=\{1,...,r\}$, then this question involves the existence of Latin hypercuboids of class $r$.
\end{problem}

\begin{problem}
 Find a formula for the singular endomorphisms of $H(m,n,m)$. 
\end{problem}

Latin squares and Latin rectangles have been counted for decades; however, there is no publication on the numbers of Latin cuboids and Latin hypercuboids of class $r$, in general.

\begin{problem}
 Count Latin hypercuboids of class $r$.
\end{problem}

\begin{problem}
 Is there a general combinatorial interpretation for the singular endomorphisms of $H(n_1,...,n_m,S)$, for any choice of parameters?
\end{problem}

%
%

%


\end{document}